\numberwithin{equation}{section}
\theoremstyle{plain}
\newtheoremstyle{myremark}{10pt}{10pt}{}{}{\bfseries}{.}{.5em}{}
\newtheorem{definition}{Definition}[section]
\newtheorem{theorem}{Theorem}[section]
\newtheorem{prop}{Proposition}[section]
\newtheorem{lemma}{Lemma}[section]
\begin{document}

\title[ Full Discretization of Stochastic Semilinear Schr\"{o}dinger equation]{Full Discretization of Stochastic Semilinear Schr\"{o}dinger equation driven by multiplicative Wiener noise}
\address{Department of Mathematics and Statistics, Indian Institute of Technology Kanpur, Kanpur-208016, India}

\author{Suprio Bhar}
\email{Suprio Bhar: suprio@iitk.ac.in}
\author{Mrinmay Biswas}
\email{Mrinmay Biswas: mbiswas@iitk.ac.in}
\author{Mangala Prasad}
\email{Mangala Prasad: mangalap21@iitk.ac.in}

	\keywords{Stochastic semilinear Schr\"{o}dinger equation, Multiplicative noise, Wiener process, Finite element Method, Strong convergence  }
	
	\subjclass[2020]{ 60H15, 65N30, 65M60, 60H35, 65C30, 35Q41}

\date{}

\dedicatory{}

\begin{abstract}
	In this article, we have analyzed the full discretization of the Stochastic semilinear Schr\"{o}dinger equation in a bounded convex polygonal domain driven by multiplicative Wiener noise. We use the finite element method for spatial discretization and the stochastic trigonometric method for time discretization and derive a strong convergence rate with respect to both parameters (temporal and spatial). Numerical experiments have also been performed to support theoretical bounds. 
	\end{abstract}

\maketitle

\section{Introduction and Main Results}\label{s1}
	We study the full discretization of stochastic semilinear Schrodinger equation driven by multiplicative  noise,
	\begin{equation}\label{eqn1.1}
		 \begin{split}
       & du+ i\Delta udt= g(u)dt +f_1(u)dW_1+if_2(u) dW_2\, \quad \text{in} \quad (0,\infty) \times \mathcal{O},\\
       & u=0 \quad \text{in}  \quad (0,\infty) \times \partial \mathcal{O},\\
       & u(0,x)=u_0 \quad \text{in} \quad \mathcal{O}.
    \end{split}
	\end{equation}
	where $\mathcal{O} \subset{\mathbb{R}^d}$, $d=1,2,3$ is a bounded convex polygonal domain with boundary $\partial \mathcal{O},$ and $\{W_j(t)\}_{t \geq 0}$ for $j=1,2$ be two $L^2(\mathcal{O})$-valued Wiener processes on a filtered probability space $\big(\Omega,\mathcal{F},P,\{\mathcal{F}_t\}_{t\geq 0}\big)$ with respect to the filtration $\{\mathcal{F}_t\}_{t\geq 0}$.iIn our analysis the Wiener process need not be independent. The smoothness assumptions for the nonlinearities $g$, $f_1$, and $f_2$ will be specified below. The initial date  $u_0$ is a $\mathcal{F}_0$-measurable random variable. We shall discretize with a stochastic trigonometric method in time and a linear finite element method in space.

    We refer the reader to the introduction of \cite{finite} for the relevant literature on the spatial discretization of stochastic Schr\"{o}dinger equations. A full discretization (time and space) of stochastic wave and heat equations with multiplicative noise have been studied in the literature; see for example \cite{cohen2016, davidcohen16, YYanSiam5}. Analytical and physical properties of solutions of deterministic linear and semi-linear Schr\"{o}dinger equation and its applications have been extensively studied in the literature \cite{courantL,dautray} and references therein. The existence and uniqueness of a solution of stochastic Scr\"{o}dinger equations have been studied, see for example \cite{debussche1999,debussche2003}. To our knowledge, the discretization of stochastic semilinear Schr\"{o}dinger equation in time and space with multiplicative noise is a new result.

    In this paper, we prove the mean square convergence for the full discretization of stochastic linear Schr\"{o}dinger equation with additive noise. Furthermore, we study the full discretization of stochastic semilinear  Schr\"{o}dinger equation with multiplicative noise. 
 \subsection{\bf Setup and Main results: Full Discretization for the Linear Case } 
	We first study the full discretization of stochastic Schr\"{o}dinger equation driven by additive noise
	\begin{equation}\label{eqn1.2}
		\begin{split}
			&du+i \Delta u\,dt =dW_1 +i\, dW_2,  \quad \text{in}\quad ( 0,\infty)\times  \mathcal{O}, \\
			&u=0, \quad \text{in}\quad  (0,\infty)\times  \partial \mathcal{O},\\
			&u(0,x)=u_0(x), \quad \text{in}\quad \mathcal{O}.
		\end{split}
	\end{equation}
    Let us put $u_1=Re(u),\, u_2=Im(u)$ and $u_{0,1}=Re(u_0),\, u_{0,2}=Im(u_0)$. We use the semigroup framework to study the full discretization of stochastic linear Schr\"{o}dinger equation. As above, equation \eqref{eqn1.2} can be written in a system form as
	\begin{equation}\label{Linear}
		d {\begin{bmatrix}
				u_1\\
				u_2
		\end{bmatrix}}={\begin{bmatrix}
				0 & -A\\
				A & 0
		\end{bmatrix}} 
		\begin{bmatrix}
			u_1\\
			u_2
		\end{bmatrix}
		dt+ \begin{bmatrix}
			dW_1\\
			dW_2
		\end{bmatrix},\quad t>0, \quad \begin{bmatrix}
			u_1(0)\\
			u_2(0)
		\end{bmatrix}=\begin{bmatrix}
			u_{0,1}\\
			u_{0,2}
		\end{bmatrix},
	\end{equation}
  where $A$ is Laplace operator defined in Appendix \ref{s2.1}. The System \eqref{Linear} can be written in an abstract form as
	\begin{equation}\label{eqn1.3}
		dX(t)=\mathbb{A}X(t)dt+dW(t), \hspace{.2cm} t>0; \hspace{.2cm} X(0)=X_0,
	\end{equation}
	where $ X= \begin{bmatrix}
		u_1 \\
		u_2
	\end{bmatrix}$, $X_0:=\begin{bmatrix}
		u_{0,1}\\
		u_{0,2}
	\end{bmatrix}, \, \mathbb{A}=\begin{bmatrix}
		0 & -A \\
		A  & 0
	\end{bmatrix}$ and $dW:=\begin{bmatrix}
			dW_1\\
			dW_2
		\end{bmatrix}.$ 
	In this framework, the weak solution of equation \eqref{eqn1.3} is given by
	\begin{equation}\label{Slinear}
		X(t)=e^{t\mathbb{A}}X_0+ \int_0^t e^{(t-\tau)\mathbb{A}}dW(\tau)\, ,
	\end{equation}
    where \[e^{t\mathbb{A}}=
	\begin{bmatrix}
		C(t) & -S(t)\\
		S(t) & C(t)
	\end{bmatrix},\quad t\geq 0.
 \]
 The definition of $C(t) \text{ and }S(t)$ are defined in Appendix  \ref{s2.1}. It is well known that the solution \eqref{Slinear} satisfies for some $C=C_{\mathcal{O}}>0,$
	\[
		|||X(t)|||_{L^2(\Omega , \mathbf{H}^{\theta})} \leq C \left( |||X_0|||_{L^2(\Omega , \mathbf{H}^{\theta})} + t^{1/2}\left( \|A ^{\theta /2}Q_1^{1/2}\|_{HS}+\|A ^{\theta /2}Q_2^{1/2}\|_{HS}\right)\right),\quad t \geq 0.
	\]
	We now consider the finite element approximations of the stochastic linear Schr\"{o}odinger equation \eqref{eqn1.3}. We discretize the spatial variables with a standard piecewise linear finite element method. The spatially discrete analog of
	the equation \eqref{eqn1.3} is to find $X_h(t)=(u_{h,1}(t),u_{h,2}(t))^\text{T} \in V_h \times V_h$ (definition of $V_h$ is define in Appendix \ref{s2.2}) such that 
	\begin{equation}\label{eqn1.4}
		dX_h(t)=\mathbb{A}_hX_h(t)dt+ \mathcal{P}_h dW(t), \hspace{.3cm} t>0, \hspace{.2cm } X_h(0)=X_{0,h},
	\end{equation}
    where $\mathbb{A}_h=\begin{bmatrix}
		0 & -A _h\\
		A _h & 0
	\end{bmatrix}$ and $\mathcal{P}_h$ is projection operator define in Appendix \ref{s2.2}.
	  The unique mild solution of \eqref{eqn1.4} is given by 
	\begin{equation}\label{Dslinear}
		X_h(t)=e^{t\mathbb{A}_h}X_{0,h}+\int_0^t e^{(t-\tau)\mathbb{A}_h}\mathcal{P}_hdW(\tau),\quad t\geq 0.
	\end{equation}
    Here $\mathbb{A}_h$ is a generator of $C_0$ semigroup $e^{t\mathbb{A}_h}$ and it is given by  
 \[e^{t\mathbb{A}_h}=
	\begin{bmatrix}
		C_h(t) & -S_h(t)\\
		S_h(t) & C_h(t)
	\end{bmatrix},\quad t\geq 0,
 \]
	where $C_h(t)=\cos{(tA _h)},\hspace{.2cm} S_h(t)=\sin{(tA _h )}.$ 
	For example, similar to the infinite-dimensional case, using $\{(A _{h,j},\phi _{h,j})\}_{j=1}^{N_h}$, the
	orthonormal eigenpairs of the discrete Laplacian $A _h$, with $N_h = \text{dim}(V_h)$, we have 
 for $t\geq 0,$
	\[
	C_h(t)v_h=\cos{(tA _h)}v_h=\sum_{j=1}^{N_h}\cos{(t\lambda_{h,j})}(v_{h,j},\phi_{h,j})\phi_{h,j}, \hspace{.2cm} v_h\in V_h.
	\]
    We present the following theorem on the convergence of spatial approximation.
\begin{theorem}[Theorem 1.4 
 in \cite{finite}]\label{thm1}
Let $\theta \in [0,2]$, the covariance operators $Q_i,\,i=1,2$ satisfy $$\|A ^{\theta /2}Q_1^{1/2}\|_{HS}+\|A ^{\theta /2}Q_2^{1/2}\|_{HS}<\infty,$$ $X_0=(u_{0,1},u_{0,2})^\text{T}$ and $X_{0,h}=(u_{h,0,1},u_{h,0,2})^\text{T}=(\mathcal{P}_h u_{0,1},\mathcal{P}_h u_{0,2})^\text{T}$. Let $X=(u_1,u_2)^\text{T}$ and $X_h=(u_{h,1},u_{h,2})^\text{T}$ be given by $\eqref{Slinear} \text{ and } \eqref{Dslinear}$ respectively. Then, the following estimates hold for $t \geq 0$:
		\begin{equation*}
		\begin{split}
			\|u_{h,1}(t)-u_1(t)\|_{L^2(\Omega;\dot{H}^0)} \leq C_th^{\theta} \left(|||X(0)|||_{L^2(\Omega,\mathbf{H}^{\theta})} + \|A ^{\theta /2}Q_1^{1/2}\|_{HS}+\|A ^{\theta /2}Q_2^{1/2}\|_{HS}\right),\\
			\|u_{h,2}(t)-u_2(t)\|_{L^2(\Omega;\dot{H}^0)}
			\leq C_th^{\theta} \left(|||X(0)|||_{L^2(\Omega,\mathbf{H}^{\theta})} + \|A ^{\theta /2}Q_1^{1/2}\|_{HS}+\|A ^{\theta /2}Q_2^{1/2}\|_{HS}\right),
		\end{split}
		\end{equation*}
		 where $C_t$ is an increasing function in time.
	\end{theorem}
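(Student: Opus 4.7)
Because both $X(t)$ and $X_h(t)$ are given explicitly as stochastic convolutions in \eqref{Slinear} and \eqref{Dslinear}, the natural strategy is to decompose
\[
X(t)-X_h(t) \;=\; \bigl(e^{t\mathbb{A}}-e^{t\mathbb{A}_h}\mathcal{P}_h\bigr)X_0 \;+\; \int_0^t \bigl(e^{(t-\tau)\mathbb{A}}-e^{(t-\tau)\mathbb{A}_h}\mathcal{P}_h\bigr)\,dW(\tau),
\]
and estimate each term separately. This reduces the problem to controlling the deterministic family
\[
F_h(s)\ :=\ e^{s\mathbb{A}}-e^{s\mathbb{A}_h}\mathcal{P}_h,\qquad s\ge 0,
\]
componentwise in terms of $C(s)-C_h(s)\mathcal{P}_h$ and $S(s)-S_h(s)\mathcal{P}_h$ on the spaces $\dot H^{\theta}$.

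\textbf{Step 1 (deterministic FEM estimates for the cosine/sine semigroups).} First I would prove that for $\theta\in[0,2]$ and $v\in\dot H^{\theta}$,
\[
\|(C(s)-C_h(s)\mathcal{P}_h)v\|+\|(S(s)-S_h(s)\mathcal{P}_h)v\|\ \le\ C\,(1+s)\,h^{\theta}\,\|A^{\theta/2}v\|.
\]
The standard device is the Ritz projection $R_h:\dot H^1\to V_h$ and the splitting $u_h(s)-u(s)=(u_h(s)-R_h u(s))+(R_h u(s)-u(s))$. The second part is controlled by Bramble--Hilbert / Aubin--Nitsche estimates $\|(I-R_h)v\|\le Ch^{\theta}\|A^{\theta/2}v\|$ for $\theta\in[1,2]$; for $\theta\in[0,1]$ one uses $L^2$-stability of $\mathcal{P}_h$ and then interpolates. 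The first part satisfies a finite-dimensional Schr\"odinger-type ODE with forcing driven by $(I-R_h)u$, and here the key ingredient is that $e^{is A_h}$ is unitary on $V_h$, so a Duhamel/energy argument gives a bound \emph{linear} in $s$, which produces the factor $(1+s)$.

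\textbf{Step 2 (stochastic term via It\^o isometry).} For the initial-data term I would take expectations, apply Step~1 componentwise to $F_h(t)$, and use $X_{0,h}=\mathcal{P}_h X_0$ together with the stability of $\mathcal{P}_h$. For the stochastic convolution the It\^o isometry gives
\[
\mathbb{E}\Bigl\|\int_0^t F_h(t-\tau)\,dW(\tau)\Bigr\|^2 \;=\; \int_0^t \|F_h(t-\tau)\,Q^{1/2}\|_{HS}^2\,d\tau,
\]
with $Q=\operatorname{diag}(Q_1,Q_2)$. Expanding the Hilbert--Schmidt norm in an eigenbasis and applying Step~1 to each basis vector yields
\[
\|F_h(s)Q^{1/2}\|_{HS}^2 \ \le\ C(1+s)^2 h^{2\theta}\bigl(\|A^{\theta/2}Q_1^{1/2}\|_{HS}^2+\|A^{\theta/2}Q_2^{1/2}\|_{HS}^2\bigr).
\]
Integrating in $\tau\in[0,t]$ yields a monotone time constant $C_t$; combining with the initial-data bound and extracting the first or second component gives the stated inequalities.

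\textbf{Main obstacle.} The nontrivial point is Step~1 for the full range $\theta\in[0,2]$: one must achieve the sharp rate $h^{\theta}$ for rough as well as smooth data while keeping the growth in $s$ only linear. Unlike the parabolic setting, the unitary Schr\"odinger group provides no smoothing, so one cannot trade regularity for a better rate; the argument must be energy-based, the Ritz estimates must be combined with interpolation at the correct intermediate index, and the time growth must be tracked carefully so that integrating $\|F_h(s)Q^{1/2}\|_{HS}^2$ in $s$ produces a finite and monotone constant $C_t$.
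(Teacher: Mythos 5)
Your proposal is sound and follows exactly the intended route: the paper does not reprove Theorem~\ref{thm1} (it is imported verbatim from \cite{finite}), but its proof there rests on precisely your decomposition, namely the deterministic error operators $C(s)-C_h(s)\mathcal{P}_h$ and $S(s)-S_h(s)\mathcal{P}_h$ bounded by $C_s h^{\theta}$ on $\dot H^{\theta}$ (this is Theorem~1.3 of \cite{finite}, which the present paper invokes repeatedly in the proof of Theorem~\ref{thm4}), applied to the initial-data term and, via It\^o's isometry, to the stochastic convolution. No gaps; your Step~1 is the substantive ingredient and is exactly the cited deterministic FEM estimate.
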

  To discretize the finite element approximation \eqref{eqn1.4} in time, one is often interested in using explicit methods with large step sizes. Let $0=t_0< \,\cdots\,< T_{N_t}$ be a uniform partition of the time interval $[0,T_N]$ with the time step $k=1/N_t$ and time subintervals $I_n=(t_{n-1},t_n),\, n=0,\,\cdots\,N_t$. Then the stochastic trignometric method is given, for $n=0,\, \cdots, N_t$ 
       $U^{n+1}=e^{k\mathbb{A}_h}U^n+e^{k\mathbb{A}_h}\mathcal{P}_h \Delta W^n$ that is 
\begin{equation} \label{eqn1.5}
   \begin{bmatrix}
       U_1^{n+1} \\
       U_2^{n+1}
   \end{bmatrix} =
   \begin{bmatrix}
     C_h(k) & -S_h(k)  \\
     S_h(k) & C_h(k)
   \end{bmatrix}
   \begin{bmatrix}
       U^n_1\\
       U^n_2
   \end{bmatrix}+ \begin{bmatrix}
     C_h(k) & -S_h(k)  \\
     S_h(k) & C_h(k)
   \end{bmatrix}
   \begin{bmatrix}
       \mathcal{P}_h \Delta W^n_1\\
        \mathcal{P}_h \Delta W^n_2
   \end{bmatrix}.
\end{equation}
where $\Delta W_j^n=W_j(t_{n+1})-W_j(t_{n})$ for $j=1,2 $ denotes the Wiener increments and $U^0_1=u_{h,1}(0),\, U^0_2=u_{h,2}(0)$. Therefore here we get an approximation $U^n_j \approx u_{h,j}(t_n)$
 of the exact solution of our finite element problem at the discrete times $t_n=nk$. 
 
 We formulate an error estimate for the full discretization. 
 \begin{theorem}\label{thm2}
     Consider the numerical discretization of equation \eqref{eqn1.4} using the stochastic trigonometric scheme \eqref{eqn1.5}, where the time step size is 
$k$. The strong global errors of this numerical scheme satisfy the following estimate. If the condition\[\|A^{\theta /2}Q_1^{1/2}\|_{HS}+ \|A^{\theta /2}Q_2^{1/2}\|_{HS} < \infty \] holds $\text{ for } \theta \in [0,2]$, then the numerical solution satisfies the error bounds
      \begin{equation*}
      \begin{split}
          \|U^n_1-u_{h,1}(t_n)\|_{L_2(\Omega, \dot{H}^0)} \leq Ck^{\theta /2}(\|A^{\theta /2}Q_1^{1/2}\|_{HS}+ \|A^{\theta /2}Q_2^{1/2}\|_{HS} )\\
          \|U^n_2-u_{h,2}(t_n)\|_{L_2(\Omega, \dot{H}^0)} \leq Ck^{\theta /2}(\|A^{\theta /2}Q_1^{1/2}\|_{HS}+ \|A^{\theta /2}Q_2^{1/2}\|_{HS} ).
          \end{split}
      \end{equation*}
  Here, $C=C(T)$ is a constant that depends on $T$ but remains independent of the parameters $ t_n=nk \leq T$ and $n$ provided that $t_n=nk \leq T$.
\end{theorem}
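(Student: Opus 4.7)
The plan is to introduce the error $e^n := U^n - X_h(t_n)$, derive a closed Itô-integral representation for it, reduce the $L^2(\Omega;\mathbf{H}^0)$-norm to a pathwise operator estimate via the unitarity of $e^{t\mathbb{A}_h}$, and then prove that pathwise estimate from the spectral decomposition of $A_h$.

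Since $U^0 = X_h(0)$ we have $e^0 = 0$. Applying the mild-solution formula \eqref{Dslinear} on $(t_n, t_{n+1})$ and subtracting the scheme \eqref{eqn1.5} yields
\begin{equation*}
e^{n+1} = e^{k\mathbb{A}_h}\,e^n + \int_{t_n}^{t_{n+1}}\bigl[e^{k\mathbb{A}_h} - e^{(t_{n+1}-\tau)\mathbb{A}_h}\bigr]\mathcal{P}_h\,dW(\tau).
\end{equation*}
Iterating from $n=0$ and writing $\kappa(\tau) := t_j$ for $\tau \in [t_j, t_{j+1})$ gives the single Itô-integral representation
\begin{equation*}
e^n = \int_{0}^{t_n}\bigl[e^{(t_n - \kappa(\tau))\mathbb{A}_h} - e^{(t_n - \tau)\mathbb{A}_h}\bigr]\mathcal{P}_h\,dW(\tau).
\end{equation*}
Applying the Itô isometry expresses $\|e^n\|_{L^2(\Omega;\mathbf{H}^0)}^2$ as an integral over $[0, t_n]$ of squared Hilbert--Schmidt norms built from $Q_1^{1/2}$ and $Q_2^{1/2}$. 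Because $\mathbb{A}_h$ is block-skew, the group $e^{t\mathbb{A}_h}$ is unitary on $\mathbf{H}^0$, so I can factor $e^{(t_n - \kappa(\tau))\mathbb{A}_h} - e^{(t_n - \tau)\mathbb{A}_h} = e^{(t_n-\tau)\mathbb{A}_h}\bigl(e^{(\tau - \kappa(\tau))\mathbb{A}_h} - I\bigr)$ and drop the unitary prefactor inside the HS norm, reducing the problem to estimating $\|(I - e^{r\mathbb{A}_h})\mathcal{P}_h Q_j^{1/2}\|_{HS}$ for $r = \tau - \kappa(\tau) \in [0, k]$ and $j = 1, 2$.

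The heart of the argument is the pathwise estimate that, for $\theta \in [0, 2]$, $r \geq 0$, and $v = (v_1, v_2)^T \in V_h \times V_h$,
\begin{equation*}
\|(I - e^{r\mathbb{A}_h}) v\|_{\mathbf{H}^0} \leq 2\,r^{\theta/2}\,\|A_h^{\theta/2} v\|_{\mathbf{H}^0}.
\end{equation*}
To prove it, I expand $v_i = \sum_j c_i^j \phi_{h,j}$ in the eigenbasis of $A_h$ and use the explicit block form of $e^{r\mathbb{A}_h}$ built from $C_h(r), S_h(r)$. A direct computation shows that the mixed $\langle v_1, v_2 \rangle$-type contributions cancel by antisymmetry, leaving
\begin{equation*}
\|(I - e^{r\mathbb{A}_h}) v\|_{\mathbf{H}^0}^2 = \sum_j 4\sin^2(r\lambda_{h,j}/2)\bigl(|c_1^j|^2 + |c_2^j|^2\bigr).
\end{equation*}
The elementary bound $|\sin x| \leq x^{\theta/2}$ for $x \geq 0$ and $\theta \in [0,2]$ then gives the claim. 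Combining this with the standard finite element stability $\|A_h^{\theta/2}\mathcal{P}_h \phi\|_{L^2} \leq C\,\|A^{\theta/2}\phi\|_{L^2}$ for $\theta \in [0, 2]$, applied in Hilbert--Schmidt norm to $Q_j^{1/2}$, yields $\|(I - e^{r\mathbb{A}_h})\mathcal{P}_h Q_j^{1/2}\|_{HS} \leq C\,r^{\theta/2}\|A^{\theta/2}Q_j^{1/2}\|_{HS}$.

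Inserting this estimate, using $r \leq k$ and $\int_0^{t_n} d\tau = t_n \leq T$, and taking square roots produces
\begin{equation*}
\|e^n\|_{L^2(\Omega;\mathbf{H}^0)} \leq C(T)\,k^{\theta/2}\bigl(\|A^{\theta/2}Q_1^{1/2}\|_{HS} + \|A^{\theta/2}Q_2^{1/2}\|_{HS}\bigr),
\end{equation*}
and the two componentwise estimates claimed in the theorem follow at once from $\|U_j^n - u_{h,j}(t_n)\|_{L^2}^2 \leq \|e^n\|_{\mathbf{H}^0}^2$ for $j = 1, 2$. I expect the main obstacle to be the spectral estimate above: because $\mathbb{A}_h$ is skew-symmetric and $e^{t\mathbb{A}_h}$ purely oscillatory, there is no parabolic smoothing of the noise to exploit, and the full $\theta = 2$ rate is salvaged only by the precise cancellation of the cross terms in the block structure that collapses everything to the clean $\sin^2(r\lambda_{h,j}/2)$ form; any cruder bound on $\|I - e^{r\mathbb{A}_h}\|$ would lose an order of convergence.
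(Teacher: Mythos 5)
Your proof is correct and reaches the stated rate, but it executes the two key technical steps differently from the paper. The skeleton is shared: both arguments compare the one-step recursion $U^{n+1}=e^{k\mathbb{A}_h}U^n+e^{k\mathbb{A}_h}\mathcal{P}_h\Delta W^n$ with the mild solution on each subinterval and telescope from $e^0=0$. The paper then isolates per-step local errors $d^n$ (Lemma \ref{lemma1}), kills the cross terms in $\sum_j e^{t_{n-1-j}\mathbb{A}_h}d^j$ by invoking independence of the increments and the martingale property, and controls each $d^n$ via the H\"older estimates of Lemma \ref{lemma0}, which are obtained by interpolating between the endpoint cases $\theta=0$ and $\theta=2$. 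You instead collapse the whole error into a single It\^o integral $\int_0^{t_n}[e^{(t_n-\kappa(\tau))\mathbb{A}_h}-e^{(t_n-\tau)\mathbb{A}_h}]\mathcal{P}_h\,dW(\tau)$, so the orthogonality of increments is absorbed into one application of the isometry, and you replace the interpolation lemma by the exact spectral identity $\|(I-e^{r\mathbb{A}_h})v\|^2=\sum_j 4\sin^2(r\lambda_{h,j}/2)(|c_1^j|^2+|c_2^j|^2)$, which follows from $(I-R(\alpha))^{T}(I-R(\alpha))=4\sin^2(\alpha/2)I$ for the $2\times2$ rotation blocks; combined with $|\sin x|\le\min(1,x)\le x^{\theta/2}$ and the stability bound \eqref{eqn2.4} this is sharper and arguably cleaner than the paper's Lemma \ref{lemma0}, and it makes transparent why no order is lost despite the absence of parabolic smoothing. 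One small caveat: the paper explicitly allows $W_1$ and $W_2$ to be dependent, so you cannot apply the It\^o isometry to the vector integral against $dW=(dW_1,dW_2)^{T}$ as if the joint covariance were block-diagonal; you should first split the integrand into its $dW_1$ and $dW_2$ parts (at the cost of a factor $2$, exactly as the paper does in Lemma \ref{lemma1}) and apply the scalar isometry to each. With that adjustment your argument goes through unchanged.
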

The combination of Theorem \ref{thm1} and Theorem \ref{thm2} yields the following result for the full discretization.
\begin{theorem}\label{thm3}
     Consider the numerical solution of \eqref{eqn1.2} by the finite element method in space with a maximal mesh size $h$ and the numerical scheme \eqref{eqn1.5} with a time step size $k$ on the time interval $[0, T]$. Let $X=(u_1,u_2)$ and $X_h=(u_{h,1},u_{h,2})$ be the solution of \eqref{eqn1.3} and \eqref{eqn1.4} respectively and the initial random variable $X_0=(u_1(0),u_2(0))$ satisfies 
       $|||X_0|||_{L_2(\Omega,\mathbf{H}^{\theta})} < \infty$.
      If $u_{h,1}=\mathcal{P}_hu_1(0),u_{h,2}(0)=\mathcal{P}_hu_2(0) \text{ and } \|A^{\theta /2}Q_1^{1/2}\|_{HS}+ \|A^{\theta /2}Q_2^{1/2}\|_{HS} < \infty $ holds for $\theta \in [0,2]$, then the numerical solution satisfies the error bounds
      \[
      \begin{split}
          \|U^n_1-u_1(t_n)\|_{L_2(\Omega, \dot{H}^0)} \leq C(T)(h^{\theta}+k^{\theta /2})(\|A^{\theta /2}Q_1^{1/2}\|_{HS}+ \|A^{\theta /2}Q_2^{1/2}\|_{HS} )\quad \text{ for } \theta \in [0,2],\\
            \|U^n_1-u_1(t_n)\|_{L_2(\Omega, \dot{H}^0)} \leq C(T)(h^{\theta}+k^{\theta /2})(\|A^{\theta /2}Q_1^{1/2}\|_{HS}+ \|A^{\theta /2}Q_2^{1/2}\|_{HS} ) \quad \text{ for } \theta \in [0,2].
      \end{split}
      \]
        Here, $C=C(T)$ is a constant that depends on $T$ but remains independent of the parameters $ t_n=nk \leq T$ and $n$ provided that $t_n=nk \leq T$.
      \end{theorem}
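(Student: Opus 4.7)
The plan is to prove Theorem \ref{thm3} as an immediate consequence of Theorems \ref{thm1} and \ref{thm2} by inserting the semidiscrete solution $u_{h,j}(t_n)$ as an intermediate quantity and applying the triangle inequality. For $j = 1,2$, I would write
\[
\|U^n_j - u_j(t_n)\|_{L^2(\Omega,\dot{H}^0)} \leq \|U^n_j - u_{h,j}(t_n)\|_{L^2(\Omega,\dot{H}^0)} + \|u_{h,j}(t_n) - u_j(t_n)\|_{L^2(\Omega,\dot{H}^0)},
\]
so that the full-discretization error is split into a purely temporal error (between the stochastic trigonometric scheme and the spatially-discrete equation) and a purely spatial error (between the spatially-discrete and the continuous equation).

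Next, I would control the first term directly by Theorem \ref{thm2}, which yields
\[
\|U^n_j - u_{h,j}(t_n)\|_{L^2(\Omega,\dot{H}^0)} \leq C(T)\, k^{\theta/2}\left(\|A^{\theta/2}Q_1^{1/2}\|_{HS}+\|A^{\theta/2}Q_2^{1/2}\|_{HS}\right),
\]
uniformly in $n$ as long as $t_n = nk \leq T$. For the second term I would invoke Theorem \ref{thm1}; the hypothesis $u_{h,j}(0) = \mathcal{P}_h u_j(0)$ is exactly the one required there, so Theorem \ref{thm1} gives
\[
\|u_{h,j}(t_n) - u_j(t_n)\|_{L^2(\Omega,\dot{H}^0)} \leq C_{t_n}\, h^{\theta}\left(|||X_0|||_{L^2(\Omega,\mathbf{H}^{\theta})}+\|A^{\theta/2}Q_1^{1/2}\|_{HS}+\|A^{\theta/2}Q_2^{1/2}\|_{HS}\right).
\]

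Finally, since $C_{t_n}$ is increasing in time it is bounded by $C_T := C(T)$ for $t_n \leq T$, and the finite quantity $|||X_0|||_{L^2(\Omega,\mathbf{H}^{\theta})}$ can be absorbed into this constant. Adding the two contributions produces the claimed bound of the form $C(T)(h^{\theta}+k^{\theta/2})(\|A^{\theta/2}Q_1^{1/2}\|_{HS}+\|A^{\theta/2}Q_2^{1/2}\|_{HS})$ with a constant independent of $n$ in the prescribed range.

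There is no genuine obstacle here: the only things to verify are that the initial-data hypothesis of Theorem \ref{thm1} matches the one in Theorem \ref{thm3} (it does, by the stipulation $u_{h,j}(0) = \mathcal{P}_h u_j(0)$), and that the time-dependent constants from the two theorems can be merged into a single $C(T)$ valid uniformly for $t_n \leq T$, which is immediate since both are increasing in $t_n$. The second component $U^n_2$ is handled identically by symmetry of the system \eqref{Linear}.
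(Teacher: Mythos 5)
Your proposal is correct and follows exactly the paper's route: the paper's proof of Theorem \ref{thm3} consists of the single remark that it follows from Theorems \ref{thm1} and \ref{thm2} via the triangle inequality, which is precisely the decomposition you carry out (with the sensible extra observation that the finite quantity $|||X_0|||_{L^2(\Omega,\mathbf{H}^{\theta})}$ must be absorbed into $C(T)$ to match the stated form of the bound).
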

\subsection{\bf Main results: Full Discretization for the Semilinear Case} 
We consider the numerical discretization of stochastic semilinear Schr\"{o}dinger equation \eqref{eqn1.1} driven by multiplicative noise.
The abstract form of \eqref{eqn1.1} is given by 
\begin{equation*}
    d \begin{bmatrix}
        u_1 \\
        u_2
    \end{bmatrix}=\begin{bmatrix}
        0 &-A\\
        A &0
    \end{bmatrix}
    \begin{bmatrix}
        u_1\\
        u_2
    \end{bmatrix}dt + \begin{bmatrix}
        g_1(u_1,u_2) \\
        g_2(u_1,u_2)
    \end{bmatrix}dt +\begin{bmatrix}
        f_1(u_1,u_2) & 0\\
        0 & f_2(u_1,u_2)
    \end{bmatrix} dW \quad \text{for } t>0
\end{equation*}
with initial condition $ \begin{bmatrix}
        u_1(0)\\
        u_2(0)
    \end{bmatrix}=\begin{bmatrix}
        u_{0,1}\\
        u_{0,2}
    \end{bmatrix}$ and $dW:=\begin{bmatrix}
    dW_1\\
    dW_2
\end{bmatrix}$.
The equation \eqref{eqn1.1} can be written as 
\begin{equation} \label{eqn1.6}
    dX(t)=(\mathbb{A}X(t)+G(X(t)))dt+ F(X(t))dW(t),\quad t>0,\, X(0)=X_0,
\end{equation}
where $X(t)=(u_1(t),u_2(t))^T$ for $t\geq 0 $, $X_0=(u_{0,1},u_{0,2})^T$,\\
 $ F(X(t))=\begin{bmatrix}
     f_1(u_1(t),u_2(t)) & 0\\
     0 & f_2(u_1(t),u_2(t))
 \end{bmatrix}$,\, $G(X(t))=(g_1(u_1(t),u_2(t)),\,g_2(u_1(t),u_2(t)))^T$ and 
$\mathbb{A}$ generates the semigroup $\{e^{t\mathbb{A}}\}_{t\geq 0}$.
The weak solution of \eqref{eqn1.6} is given by 
\begin{equation}\label{eqn1.7}
    X(t)=e^{t\mathbb{A}}X_0+ \int_0^t e^{(t-\tau)\mathbb{A}}G(X(\tau))d\tau+\int_0^t e^{(t-\tau)\mathbb{A}}F(X(\tau))dW(\tau),\quad t\geq 0.
\end{equation}
The semi-discretization of \eqref{eqn1.6} is analogue of 
\begin{equation}\label{eqn1.8}
    dX_h(t) =\mathbb{A}_hX_h(t)dt+\mathcal{P}_hG(_h(t))dt+\mathcal{P}_hF(X_h(s))dW(s),\quad  t>0, \quad X_h(0)=X_{h,0}.
\end{equation}
For $t \geq 0$, the mild solution of \eqref{eqn1.8} is given  by 
\begin{equation}\label{eqn1.9}
    X_h(t)= e^{t\mathbb{A}_h}X_{h,0}+\int_0^t  e^{(t-\tau)\mathbb{A}_h}\mathcal{P}_hG(X_h(\tau))d\tau+\int_0^t e^{(t-\tau)\mathbb{A}_h}\mathcal{P}_hF(X_h(\tau))dW(\tau).
\end{equation}
To ensure the existence and uniqueness of the solution to problem \eqref{eqn1.1} we assume that the initial condition satisfies $$u_0=(u_{0,1},\,u_{0,2})^T \in L_2(\Omega, \mathbf{H}^{\theta}) \text{ for } \theta \in [0,2].$$ Furthermore, we impose the following conditions on the functions $g=(g_1,g_2):\mathbf{H} \to \mathbf{H}$ and $f=(f_1,f_2):\mathbf{H} \to \mathcal{L}_2(\mathbf{H},\mathbf{H})$:

\begin{equation}\label{assum}
\begin{split}
    &\left\|A^{\theta/ 2}g_1(u_1(s),u_2(s)) \right\| + \left\|A^{\theta/ 2}g_2(u_1(s),u_2(s))\right \| \leq L_g(\left\|u_1(s)\right\|_{\theta }+\|u_2(s)\|_{\theta }+1) ,\\
& \left\|g_1(u_1,u_2)-g_1(u_1^{\prime},u_2^{\prime})\right\|+ \left\|g_2(u_1,u_2)-g_2(u_1^{\prime},u_2^{\prime})\right\|\leq K_g(\|u_1-u_1^{\prime}\|+\|u_2-u_2^{\prime}\|)  ,\\ 
&\left\|A^{\theta/2}f_1(u_1(s),u_2(s))Q^{1/2}_1\right\|_{HS}+\left\|A^{\theta/2}f_2(u_1(s),u_2(s))Q^{1/2}_2\right\|_{HS}\\
&\leq L_f(\|u_1(s)\|_{\theta}+\|u_1(s)\|_{\theta}+1),\\
 &\left\|(f_1(u_1,u_2)-f_1(u_1^{\prime},u_2^{\prime}))Q_1^{1/2}\right\|_{HS}+ \left\|f_2(u_1,u_2)-f_2(u_1^{\prime},u_2^{\prime}))Q^{1/2}_2\right\|_{HS}\\
 &\leq K_f(\|u_1-u_1^{\prime}\|+\|u_2-u_2^{\prime}\|) 
 \end{split}   
\end{equation}
for all $u_1,\,u_2,\,u_1^{\prime}, \text{ } u_2^{\prime} \in L_2(\mathcal{O})$ and $\theta \in [0,2]$ . Throughout this paper, the inner product $(\cdot,\cdot)$ and norm $\|\cdot\|=\|\cdot\|_{L_2(\mathcal{O})}$ are defined on $L_2(\mathcal{O})$. The norm $\|\cdot\|_{HS}$ and the Hilbert space $\mathbf{H}$ is defined in Appendix \ref{s2.1}.
\begin{prop}\label{prop1}
   Let $u=(u_1,u_2)^T$ be the solution of \eqref{eqn1.1} with initial condition $u_0=(u_{0,1}\, u_{0,2})^T\text{ in } L_2(\Omega,\, \mathbf{H}^{\theta})$ for $0 \leq \theta \leq 2$. Assume that the functions  $ f=(f_1,\, f_2) \text{ and }g=(g_1,g_2)$ satisfy required conditions given in \eqref{assum} for $\theta \in [0,2]$. Then, for any $T>0$, we have the uniform bound 
   \[
   \sup_{0 \leq t \leq T} \mathbb{E}[\|u_1(t)\|_\theta^2+\|u_2(t)\|_\theta^2] \leq C.
   \]
Furthermore, for any $0 \leq s \leq t \leq T$, the following temporal continuity estimates hold:
 \[
\begin{split}
    \mathbb{E}\|u_1(t)-u_1(s)\|^2&\leq C_1|t-s|^{\theta}\left(\left\|u_{0,1}\right\|^2_{\theta}+\left\|u_{0,2}\right\|^2_{\theta}+\sup_{0 \leq t \leq T}\mathbb{E}\left(\|u_1(t)\|^2_{\theta}+\|u_2(t)\|^2_{\theta}+1\right)\right)\\
    &+C_2|t-s|\left(\sup_{0 \leq t \leq T}\mathbb{E}[\|u_1(t)\|^2_{\theta}+\|u_2(t)\|^2_{\theta}+1]\right),\\
     \mathbb{E}\|u_2(t)-u_2(s)\|^2&\leq C_3|t-s|^{\theta}\left(\left\|u_{0,1}\right\|^2_{\theta}+\left\|u_{0,2}\right\|^2_{\theta}+\sup_{0 \leq t \leq T}\mathbb{E}\left(\|u_1(t)\|^2_{\theta}+\|u_2(t)\|^2_{\theta}+1\right)\right)\\
    &+C_4|t-s|\left(\sup_{0 \leq t \leq T}\mathbb{E}[\|u_1(t)\|^2_{\theta}+\|u_2(t)\|^2_{\theta}+1]\right).
\end{split}
\] 
Here the constants $C,\, C_1,\, C_2,\,C_3,\, \text{and } C_4$ depend  only on $T$ and are independent of $\theta$. 
\end{prop}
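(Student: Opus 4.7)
The plan is to work directly from the mild-solution representation \eqref{eqn1.7} and exploit two properties of the group $\{e^{t\mathbb{A}}\}_{t \geq 0}$: it is an isometry on every $\mathbf{H}^\theta$ (since $\mathbb{A}$ is skew-adjoint), and the functional calculus applied to $A$ gives the H\"older-in-time bound
\[
\|(e^{t\mathbb{A}}-e^{s\mathbb{A}})v\|^2 \leq C\,|t-s|^{\theta}\,\|v\|_\theta^2,\qquad \theta \in [0,2],
\]
obtained by interpolating the trivial estimate $\|(e^{t\mathbb{A}}-e^{s\mathbb{A}})v\|\leq 2\|v\|$ (at $\theta=0$) with $\|(e^{t\mathbb{A}}-e^{s\mathbb{A}})v\|\leq |t-s|\,\|v\|_2$ (at $\theta=2$), using the spectral decomposition of $A$.

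For the uniform bound I would take $\|\cdot\|_\theta^2$ of both sides of \eqref{eqn1.7}, split into three pieces (initial data, drift, stochastic), and estimate each separately. The initial datum contributes $\|X_0\|_\theta^2$, which has finite expectation by hypothesis. For the drift, Cauchy--Schwarz in time together with the unitarity of $e^{t\mathbb{A}}$ on $\mathbf{H}^\theta$ and the growth hypothesis on $g_1,g_2$ in \eqref{assum} yield
\[
\Big\|\int_0^t e^{(t-\tau)\mathbb{A}}G(X(\tau))\,d\tau\Big\|_\theta^2 \leq C(T)\int_0^t \big(\|X(\tau)\|_\theta^2+1\big)\,d\tau.
\]
For the stochastic term, It\^o's isometry in $\mathbf{H}^\theta$ (again using unitarity to commute $A^{\theta/2}$ past the semigroup) and the $\theta$-growth of $f_1 Q_1^{1/2}, f_2 Q_2^{1/2}$ yield a bound of the same form. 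Taking expectations and applying Gronwall's lemma produces the required uniform bound on $\mathbb{E}\|X(t)\|_\theta^2$, from which the individual component bounds on $u_1,u_2$ follow because $\|u_j(t)\|_\theta^2 \leq \|X(t)\|_\theta^2$.

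For the temporal continuity I would subtract \eqref{eqn1.7} at times $t$ and $s$, obtaining three pieces: $(e^{t\mathbb{A}}-e^{s\mathbb{A}})X_0$; a drift difference written as $\int_s^t e^{(t-\tau)\mathbb{A}}G(X(\tau))d\tau + \int_0^s(e^{(t-\tau)\mathbb{A}}-e^{(s-\tau)\mathbb{A}})G(X(\tau))d\tau$; and the analogous decomposition of the stochastic integral. The semigroup-difference estimate converts the first piece into $C|t-s|^\theta\|X_0\|_\theta^2$ and, via the $\theta$-growth of $G$ and of $FQ^{1/2}$, converts the ``long-time'' subparts of the other two pieces into $C|t-s|^\theta$ times a supremum controlled by the already-proved uniform bound. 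The ``short-time'' subparts are bounded in $L^2(\Omega;\mathbf{H}^0)$: Cauchy--Schwarz gives $C|t-s|^2\sup_\tau\mathbb{E}\|G(X(\tau))\|^2$ for the drift, and It\^o's isometry gives $C|t-s|\sup_\tau\mathbb{E}\|F(X(\tau))Q^{1/2}\|_{HS}^2$ for the noise, both with the $\theta=0$ version of the growth bounds in \eqref{assum}. Absorbing $|t-s|^2$ into $C(T)|t-s|$ yields precisely the two-term bound in the proposition, and the $u_1,u_2$ bounds again follow from bounding the block norm $\|X(t)-X(s)\|^2$.

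The main obstacle I expect is bookkeeping around the $2\times 2$ block structure. $X$ and $G$ are column vectors, $F$ is block-diagonal driven by the possibly \emph{correlated} pair $(W_1,W_2)$, and $\mathbb{A}$ is skew block-diagonal with $\pm A$ entries. One has to verify carefully that unitarity on $\mathbf{H}^\theta$, the commutation of $A^{\theta/2}$ past $e^{t\mathbb{A}}$, and It\^o's isometry applied to the joint Wiener process all extend to this block setting, so that the jointly-posed hypotheses in \eqref{assum} translate cleanly into the componentwise bounds on $u_1$ and $u_2$ that the proposition asserts. Once this bookkeeping is in place, the remaining estimates follow the standard semigroup/stochastic-convolution template.
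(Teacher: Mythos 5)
Your proposal is correct and follows essentially the same route as the paper: the paper also works from the mild solution, proves the uniform bound via the growth hypotheses, It\^o's isometry and Gronwall, and obtains temporal continuity by splitting each drift and noise term into a ``long-time'' part controlled by the interpolated H\"older estimate for the semigroup difference (this is exactly the paper's Lemma \ref{lemma0}, stated for the cosine and sine operators) and a ``short-time'' part over $[s,t]$. The only difference is cosmetic: the paper unfolds the block operator $e^{t\mathbb{A}}$ into its $C(t)$, $S(t)$ components and estimates the two stochastic integrals against $W_1$ and $W_2$ separately (which also disposes of the possible correlation between them), whereas you keep the block notation and defer that bookkeeping.
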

The explicit time discretization of the finite element solution \eqref{eqn1.8} of the stochastic Schr\"{o}dinger equation using a stochastic  trigonometric method with step size $k$, we get 
\[
U^{n+1}= e^{k\mathbb{A}_h}U^n+e^{k\mathbb{A}_h}\mathcal{P}_hG(U^n).k+e^{k\mathbb{A}_h}\mathcal{P}_hF(U^n) \Delta W^{n}
\]
that is 
\begin{equation} \label{eqn1.10}
\begin{split}
    \begin{bmatrix}
        U_1^{n+1}\\
        U_2^{n+1}
    \end{bmatrix}=
    \begin{bmatrix}
        C_h(k) & -S_h(k)\\
        S_h(k) & C_h(k)
    \end{bmatrix} 
    \begin{bmatrix}
        U_1^n \\
        U^n_2
    \end{bmatrix}&+ \begin{bmatrix}
        C_h(k) & -S_h(k)\\
        S_h(k) & C_h(k)
    \end{bmatrix}
    \begin{bmatrix}
        \mathcal{P}_hg_1(U_1^n,\, U^n_2)\\
        \mathcal{P}_hg_2(U_1^n,\, U^n_2)
    \end{bmatrix}k\\
    &+
    \begin{bmatrix}
        C_h(k) & -S_h(k)\\
        S_h(k) & C_h(k)
    \end{bmatrix} 
    \begin{bmatrix}
        \mathcal{P}_hf_1(U^n_1,U^n_2)\Delta W_1^n\\
        \mathcal{P}_hf_2(U^n_1,U^n_2)\Delta W_2^n 
    \end{bmatrix}
    \end{split}
\end{equation}
where $\Delta W^n_j=W_j(t_{n+1})-W_j(t_{n})$ is Wiener increments for $j=1,2$. Here we get an approximation $U_j^n \approx u_{h,j}(t_n)$ of the exact finite element problem at the discrete times $t_n=nk$ for $j=1,2$. Further, we get a recursion formula
\begin{equation}\label{rformula}
U^n=e^{t_n\mathbb{A}_h}U^0+ \sum_{j=0}^{n-1}e^{(t_n-t_j)\mathbb{A}_h}\mathcal{P}_hG(U^j)k+\sum_{j=0}^{n-1}e^{(t_n-t_j)\mathbb{A}_h}\mathcal{P}_hF(U^j) \Delta W^j.
    \end{equation}
Here we formulate an error estimate for the full discretization of stochastic semilinear Sch\"{o}dinger equation.
\begin{theorem} \label{thm4}
   Consider the numerical discretization of the stochastic semilinear Schr\"{o}dinger equation with multiplicative noise \eqref{eqn1.1} over a compact time interval  $[0, T],\,\text{where } T>0$. The spatial domain is discretized using the linear finite element method, while the temporal discretization is handled via the stochastic trigonometric scheme \eqref{eqn1.10}. Assume that the initial condition is given by 
 $u_0=(u_{0,1}\, u_{0,2})^T\text{ with  }u_0\in  L_2(\Omega,\, \mathbf{H}^{\theta})$ and  and that the functions 
$f \text{ and }g$ satisfy the conditions in \eqref{assum} for 
$\theta \in [0,2]$. Then, for any $t_n\in [0,T]$, the mean square error estimates are given by
    \[
    \begin{split}
        \|U^n_1-u_1(t_n)\|_{L_2(\Omega,\dot{H}^0)}\leq  C(h^{\theta}+k^{min(\theta/2,1/2)}) \quad \text{ for } \theta \in [0,2],\\
         \|U^n_1-u_1(t_n)\|_{L_2(\Omega,\dot{H}^0)}\leq  C(h^{\theta}+k^{min(\theta/2,1/2)}) \quad \text{ for } \theta \in [0,2].
    \end{split}
    \]
Here, the constant 
$C$ depends on 
$T,\, f,\, \text{ and } g$ but remains independent of the discretization parameters 
$k \text{ and } h.$ Additionally, it is assumed that the time step size $k$ satisfies $k<1$.
\end{theorem}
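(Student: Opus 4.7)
The plan is to split the full discretization error via the triangle inequality as
\[
\|U^n - u(t_n)\|_{L_2(\Omega,\mathbf{H})} \leq \|X_h(t_n) - u(t_n)\|_{L_2(\Omega,\mathbf{H})} + \|U^n - X_h(t_n)\|_{L_2(\Omega,\mathbf{H})},
\]
and to show the first (pure spatial) term is $O(h^\theta)$ and the second (pure temporal) term is $O(k^{\min(\theta/2,1/2)})$. The spatial bound is the semilinear analog of Theorem \ref{thm1}: starting from the mild formulas \eqref{eqn1.7} and \eqref{eqn1.9}, one uses the linear finite-element semigroup estimate $\|(e^{t\mathbb{A}}-e^{t\mathbb{A}_h}\mathcal{P}_h)v\|_{\mathbf{H}} \leq Ch^\theta\|v\|_{\mathbf{H}^\theta}$ (the ingredient behind Theorem \ref{thm1}), the $A^{\theta/2}$-growth and Lipschitz assumptions \eqref{assum} on $F$ and $G$, Proposition \ref{prop1}, and It\^o's isometry for the stochastic integral; a continuous Gronwall argument then closes the estimate.

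For the temporal term I subtract the mild representation of $X_h(t_n)$ from the recursion \eqref{rformula}. Since $U^0=X_{h,0}$, the initial contributions cancel and it remains to estimate
\[
\sum_{j=0}^{n-1}\int_{t_j}^{t_{j+1}}\bigl[e^{(t_n-\tau)\mathbb{A}_h}\mathcal{P}_h G(X_h(\tau))-e^{(t_n-t_j)\mathbb{A}_h}\mathcal{P}_h G(U^j)\bigr]d\tau + \sum_{j=0}^{n-1}\int_{t_j}^{t_{j+1}}\bigl[e^{(t_n-\tau)\mathbb{A}_h}\mathcal{P}_h F(X_h(\tau))-e^{(t_n-t_j)\mathbb{A}_h}\mathcal{P}_h F(U^j)\bigr]dW(\tau).
\]
On each subinterval $[t_j,t_{j+1}]$, I insert $\pm\,e^{(t_n-t_j)\mathbb{A}_h}\mathcal{P}_h G(X_h(\tau))$ and $\pm\,e^{(t_n-t_j)\mathbb{A}_h}\mathcal{P}_h G(X_h(t_j))$ (and analogously for $F$), producing three pieces: an \emph{exponential-increment} piece controlled by the sharp bound $\|(e^{(t_n-\tau)\mathbb{A}_h}-e^{(t_n-t_j)\mathbb{A}_h})v\| \leq C|\tau-t_j|^{\theta/2}\|A_h^{\theta/2}v\|$ (which follows from $|e^{ix}-1|\leq C|x|^{\theta/2}$ and the unitarity of $e^{t\mathbb{A}_h}$ on $\mathbf{H}$), together with the $A^{\theta/2}$-growth in \eqref{assum} and Proposition \ref{prop1}; a \emph{time-regularity} piece controlled by the Lipschitz bounds in \eqref{assum} and the semi-discrete temporal H\"older estimate $\mathbb{E}\|X_h(\tau)-X_h(t_j)\|^2\leq C(k^\theta+k)$; and a \emph{Gronwall-driving} piece proportional to $\|U^j-X_h(t_j)\|_{L_2(\Omega,\mathbf{H})}$. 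It\^o's isometry turns the stochastic integrals into $L^2$-in-time integrals and produces the rate $k^{\theta/2}$ from the exponential-increment piece and $k^{\min(\theta/2,1/2)}$ from the time-regularity piece -- the latter being where the minimum in the stated rate originates.

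Assembling the pieces yields an inequality of the form
\[
\|U^n-X_h(t_n)\|^2_{L_2(\Omega,\mathbf{H})} \leq C\,k^{\min(\theta,1)} + Ck\sum_{j=0}^{n-1}\|U^j-X_h(t_j)\|^2_{L_2(\Omega,\mathbf{H})},
\]
so the discrete Gronwall inequality delivers $O(k^{\min(\theta/2,1/2)})$ for the temporal error; combined with the spatial contribution this gives the theorem. The main obstacle is the semi-discrete temporal H\"older bound used above; to establish it I would mimic Proposition \ref{prop1} for \eqref{eqn1.8}, relying only on uniform-in-$h$ properties of $(\mathbb{A}_h,\mathcal{P}_h,e^{t\mathbb{A}_h})$, namely the spectral increment $\|(e^{is\mathbb{A}_h}-I)v\| \leq C|s|^{\theta/2}\|A_h^{\theta/2}v\|$, the isometry of $e^{t\mathbb{A}_h}$ on $\mathbf{H}$, and the uniform stability $\|A_h^{\theta/2}\mathcal{P}_h v\| \leq C\|v\|_\theta$. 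Once this is in hand, the remainder of the argument is careful but routine bookkeeping paralleling the proofs of Theorems \ref{thm1} and \ref{thm2}.
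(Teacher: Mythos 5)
Your overall strategy is sound but genuinely different from the paper's. You interpose the semi-discrete solution and split the error by the triangle inequality into a purely spatial term $\|X_h(t_n)-X(t_n)\|$ and a purely temporal term $\|U^n-X_h(t_n)\|$. The paper instead compares $U^n$ from \eqref{rformula} directly with the exact mild solution \eqref{eqn1.7} and decomposes the difference into an initial, a deterministic and a stochastic error; on each subinterval these are split into exactly the three pieces you describe (a Lipschitz/Gronwall-driving term, a time-regularity term, an operator-increment term) \emph{plus} a fourth piece $(e^{(t_n-t_j)\mathbb{A}_h}\mathcal{P}_h-e^{(t_n-t_j)\mathbb{A}})$ acting on $G(X(\tau))$ or $F(X(\tau))$, which is where the $h^{\theta}$ rate enters via the deterministic estimates of \cite{finite}. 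The payoff of the paper's arrangement is that every nonlinearity is evaluated along the exact solution, so the only regularity input is Proposition \ref{prop1}; a single discrete Gronwall step then closes both rates at once. Your arrangement needs two auxiliary results the paper never proves: a semilinear spatial error bound with its own (continuous) Gronwall argument, and uniform-in-$h$ moment bounds plus temporal H\"older regularity for $X_h$.

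The second of these is where your route, as written, has a concrete obstacle. The assumptions \eqref{assum} control $A^{\theta/2}g_i$ and $A^{\theta/2}f_iQ_i^{1/2}$ in terms of the continuous norms $\|u_i\|_{\theta}$ of the arguments. Applying them along the semi-discrete solution --- as your exponential-increment piece and your semi-discrete H\"older estimate both require --- forces you to bound $\|u_{h,i}(\tau)\|_{\theta}$ for $u_{h,i}(\tau)\in V_h$. For $\theta\in[0,1]$ this is harmless since $\|v_h\|_{h,1}=\|\nabla v_h\|=\|v_h\|_1$, but for $\theta>1$ the continuous and discrete fractional norms are not uniformly comparable on $V_h$ (piecewise linears do not lie in $\dot{H}^2=H^2(\mathcal{O})\cap H^1_0(\mathcal{O})$), so the growth hypothesis in \eqref{assum} gives no usable bound near $\theta=2$. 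You would have to either restate \eqref{assum} in the discrete norms $\|\cdot\|_{h,\theta}$, or route the nonlinearity back through the exact solution (e.g.\ compare $G(X_h(\tau))$ with $G(X(\tau))$ using the Lipschitz bound and the spatial error estimate). The paper's direct decomposition, together with \eqref{eqn2.4} and Lemma \ref{lemma0}, sidesteps this entirely because only $G(X(\tau))$ and $F(X(\tau))$ ever appear under a fractional power of $A$ or $A_h$.
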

	\subsection{Organization of the paper}
	The paper is organized as follows. Section \ref{s1} introduces the model and states the main results. Section \ref{prf_main} contains the proofs of the theorems and propositions, which are stated in Section \ref{s1}. In Section \ref{S_4}, we present numerical examples and corresponding figures that illustrate the convergence rate. Finally, in the appendices, we provide preliminaries on Hilbert-Schmidt operators, infinite-dimensional Wiener noise, semigroup formulations, and basic finite element estimates. 
	\section{Proofs of Main results}\label{prf_main}
	In this section, we discuss the proof of Theorems \ref{thm2}-\ref{thm4} and Proposition \ref{prop1}. We first prove the following lemmas before proving the theorems and propositions.
    \begin{lemma}\label{lemma0}
        Let $\theta \in [0,2]. $ Then, for all $0\leq s \leq t \leq T$, the following H\"{o}lder continuity estimates hold for the operators:
         \begin{align}
        &\|(S_h(t)-S_h(s))A^{-\theta/2}_h\|\leq C|t-s|^{\theta /2}, \label{eqn3.1}\\
       & \|(C_h(t)-C_h(s))A^{-\theta/2}_h\|\leq C|t-s|^{\theta /2},\label{eqn3.2}\\
        & \|(S(t)-S(s))A^{-\theta /2}\|\leq C|t-s|^{\theta /2},\label{eqn3.3}\\
         &\|(C(t)-C(s))A^{-\theta /2}\|\leq C|t-s|^{\theta /2} \label{eqn3.4}.
    \end{align}
    \end{lemma}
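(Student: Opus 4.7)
The plan is to exploit the spectral representation of the self\nobreakdash-adjoint operators $A$ and $A_h$, reducing each operator norm estimate to a scalar interpolation inequality for the trigonometric functions. All four bounds follow the same template, so I would prove one carefully and remark that the others are analogous.

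First I would write $A$ in its eigenbasis: $A\phi_j = \lambda_j \phi_j$ with $\lambda_j>0$, so that for $v\in\dot H^0$,
\[
(S(t)-S(s))A^{-\theta/2}v \;=\; \sum_{j\geq 1}\bigl(\sin(t\lambda_j)-\sin(s\lambda_j)\bigr)\lambda_j^{-\theta/2}(v,\phi_j)\phi_j.
\]
By Parseval's identity the operator norm of $(S(t)-S(s))A^{-\theta/2}$ on $\dot H^0$ equals
\[
\sup_{j\geq 1}\bigl|\sin(t\lambda_j)-\sin(s\lambda_j)\bigr|\,\lambda_j^{-\theta/2}.
\]
The analogous reductions work for $C(t)-C(s)$ (replace $\sin$ by $\cos$) and for the discrete versions (replace $(\lambda_j,\phi_j)$ by the eigenpairs $(\lambda_{h,j},\phi_{h,j})$ of $A_h$).

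The core estimate is then the elementary inequality
\[
|\sin x-\sin y|\;\leq\;\min\bigl(|x-y|,\,2\bigr)\;\leq\;2^{1-\theta/2}|x-y|^{\theta/2}\qquad(\theta\in[0,2]),
\]
which I would obtain by interpolating between the Lipschitz bound $|\sin x-\sin y|\leq|x-y|$ and the trivial bound $|\sin x-\sin y|\leq 2$ (raise the first to the power $\theta/2$, the second to the power $1-\theta/2$, multiply). The same inequality holds for $\cos$. Applying this with $x=t\lambda_j$, $y=s\lambda_j$ yields
\[
\bigl|\sin(t\lambda_j)-\sin(s\lambda_j)\bigr|\,\lambda_j^{-\theta/2}\;\leq\;2^{1-\theta/2}\bigl|(t-s)\lambda_j\bigr|^{\theta/2}\lambda_j^{-\theta/2}\;=\;2^{1-\theta/2}|t-s|^{\theta/2},
\]
uniformly in $j$, giving \eqref{eqn3.3}. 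Estimate \eqref{eqn3.4} follows identically with $\cos$. For \eqref{eqn3.1} and \eqref{eqn3.2}, one repeats the argument using the eigenpairs $\{(\lambda_{h,j},\phi_{h,j})\}_{j=1}^{N_h}$ of the discrete Laplacian $A_h$ (which are positive and whose eigenfunctions form an orthonormal basis of $V_h$ with respect to the $L^2$ inner product, as recalled in Appendix \ref{s2.2}); the constant $C=2^{1-\theta/2}$ is then independent of $h$.

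There is no real obstacle: the only mild point to check is that the interpolation bound on $|\sin x-\sin y|$ is uniform in $\theta\in[0,2]$, so the constant $C$ can be taken independent of $\theta$. Everything else is spectral bookkeeping; in particular, because the scalar bound does not depend on $\lambda_j$ or $\lambda_{h,j}$ (those cancel exactly with the $A^{-\theta/2}$ and $A_h^{-\theta/2}$ factors), the estimates hold on all of $\dot H^0$, respectively $V_h$, with an $h$\nobreakdash-independent constant.
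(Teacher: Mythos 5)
Your argument is correct, and it rests on the same two ingredients as the paper's proof: the uniform bound $|\sin x-\sin y|\leq 2$ (respectively, boundedness of $S_h$) and the Lipschitz bound $|\sin x-\sin y|\leq|x-y|$ (respectively, $\|(S_h(t)-S_h(s))v_h\|\leq|t-s|\,\|A_hv_h\|$, which the paper gets from $\frac{d}{dr}S_h(r)=C_h(r)A_h$). The difference is in how the two endpoints are combined. The paper proves the operator estimates at $\theta=0$ and $\theta=2$ and then invokes an abstract interpolation argument on the scale $\|\cdot\|_{h,\theta}$; you instead diagonalize everything in the eigenbasis of $A$ (respectively $A_h$), reduce the operator norm to $\sup_j\bigl|\sin(t\lambda_j)-\sin(s\lambda_j)\bigr|\lambda_j^{-\theta/2}$, and perform the interpolation at the scalar level via $\min(a,b)\leq a^{\theta/2}b^{1-\theta/2}$. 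Since $C(t)$, $S(t)$, and $A^{-\theta/2}$ are all functions of the same self-adjoint operator, this reduction is legitimate, and the exact cancellation $|(t-s)\lambda_j|^{\theta/2}\lambda_j^{-\theta/2}=|t-s|^{\theta/2}$ is what the paper's interpolation step encodes implicitly. What your route buys is self-containedness and an explicit, $\theta$-uniform and $h$-uniform constant $C=2^{1-\theta/2}\leq 2$, with no appeal to interpolation-space theory; what the paper's route buys is that it would survive in settings where the relevant operators are not simultaneously diagonalizable. Both are complete proofs of \eqref{eqn3.1}--\eqref{eqn3.4}.
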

\begin{proof}
   We prove the estimate \eqref{eqn3.1}. The other estimate has similar proof. For $\theta =0 \text{ and } v_h \in  V_h$, we use boundedness  of $S_h(t)$.
    \[
    \|(S_h(t)-S_h(s))v_h\|_{L_2(\mathcal{O})}\leq 2\|v_h\|_{h,0}.
    \]
    For $\theta =2 ,\, v_h \in V_h \text{ and } 0\leq s\leq t\leq T$, we  have 
    \[
    \begin{split}
    (S_h(t)-S_h(s))v_h& =\int_s ^t \frac{d}{dr}(S_h(r))v_hdr\\
    &=\int _s ^t C_h(r)A_hv_hdr\\
    \|(S_h(t)-S_h(s))v_h\|_{L_2(\mathcal{O})}& \leq 
    |t-s\|A_h v_h\|_{L_2(\mathcal{O})}=|t-s|\|v_h\|_{h,2}.
    \end{split}
    \]
    By interpolation argument, for $ 0\leq s\leq t\leq T$ we get 
    \[
    \|(S_h(t)-S_h(s))v_h\|_{L_2(\dot{H}^0)} \leq C|t-s|^{\theta /2}\|v_h\|_{h,\theta}, \hspace{2mm} v_h \in V_h, \hspace{2mm } \theta \in [0,2]
    \]
 which is \eqref{eqn3.1}. Other proofs are the same as \eqref{eqn3.1}.
\end{proof}
  \begin{lemma}\label{lemma1}
     Let us denote $d^n=(d_1^n,d_2^n)$ by 
     \[
     \begin{split}
         d_1^n:=\int_{t_n}^{t_{n+1}} C_h(t_{n+1}-\tau)\mathcal{P}_hdW_1(\tau)-\int_{t_n}^{t_{n+1}} S_h(t_{n+1}-\tau)\mathcal{P}_hdW_2(\tau)\\
         -\left(C_h(k)\mathcal{P}_h \Delta W_1^n-S_h(k)\mathcal{P}_h \Delta W_2^n\right),\\
          d_2^n:=\int_{t_n}^{t_{n+1}} S_h(t_{n+1}-\tau)\mathcal{P}_hdW_1(\tau)+\int_{t_n}^{t_{n+1}} C_h(t_{n+1}-\tau)\mathcal{P}_hdW_2(\tau)
          \\-\left(S_h(k)\mathcal{P}_h \Delta W_1^n+C_h(k)\mathcal{P}_h \Delta W_2^n\right).
     \end{split}
     \]     
     If  $\|A^{\theta /2}Q_1^{1/2}\|_{HS}+ \|A^{\theta /2}Q_2^{1/2}\|_{HS} < \infty$
     for $\theta \in [0,2], $ then 
     \[
     \mathbb{E}\|d_1^n\|^2+\mathbb{E}\|d_2^n\|^2\leq Ck^{\theta +1}(\|A^{\theta /2}Q_1^{1/2}\|_{HS}^2+ \|A^{\theta /2}Q_2^{1/2}\|_{HS}^2).
     \]
     The constant  $C=C(T)$ is independent of $h,k \text{ and } n$ with $t_n=nk \leq T.$
 \end{lemma}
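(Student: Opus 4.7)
The plan is to recognize that each increment $C_h(k)\mathcal{P}_h\Delta W_j^n$ and $S_h(k)\mathcal{P}_h\Delta W_j^n$ can itself be written as a stochastic integral over $[t_n,t_{n+1}]$ with a \emph{constant} integrand, so that the quantity $d_1^n$ (and similarly $d_2^n$) reduces to an Itô integral of an operator difference. Concretely, since $k=t_{n+1}-t_n$, one has
\[
C_h(k)\mathcal{P}_h\Delta W_1^n=\int_{t_n}^{t_{n+1}} C_h(k)\mathcal{P}_h\,dW_1(\tau),\qquad S_h(k)\mathcal{P}_h\Delta W_2^n=\int_{t_n}^{t_{n+1}} S_h(k)\mathcal{P}_h\,dW_2(\tau),
\]
and therefore
\[
d_1^n=\int_{t_n}^{t_{n+1}}\bigl[C_h(t_{n+1}-\tau)-C_h(k)\bigr]\mathcal{P}_h\,dW_1(\tau)-\int_{t_n}^{t_{n+1}}\bigl[S_h(t_{n+1}-\tau)-S_h(k)\bigr]\mathcal{P}_h\,dW_2(\tau),
\]
with the analogous decomposition for $d_2^n$. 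This reformulation is the crucial first step, after which the problem becomes purely an Itô-isometry and operator-bound estimate.

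Next, since $W_1$ and $W_2$ need not be independent, I would not combine them into a single cylindrical noise; instead I use the elementary inequality $\|X+Y\|_{L^2(\Omega)}^2\le 2\|X\|_{L^2(\Omega)}^2+2\|Y\|_{L^2(\Omega)}^2$ to separate the two stochastic integrals, and then apply Itô's isometry to each one individually. This converts $\mathbb{E}\|d_1^n\|^2$ into the sum
\[
2\int_{t_n}^{t_{n+1}}\bigl\|[C_h(t_{n+1}-\tau)-C_h(k)]\mathcal{P}_h Q_1^{1/2}\bigr\|_{HS}^2 d\tau+2\int_{t_n}^{t_{n+1}}\bigl\|[S_h(t_{n+1}-\tau)-S_h(k)]\mathcal{P}_h Q_2^{1/2}\bigr\|_{HS}^2 d\tau.
\]

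The third step is to factor through the operator $A_h^{-\theta/2}A_h^{\theta/2}$: for each term in the integrand, bound the Hilbert--Schmidt norm by the product of the operator norm of $[C_h(t_{n+1}-\tau)-C_h(k)]A_h^{-\theta/2}$ (or the $S_h$ analogue) and the Hilbert--Schmidt norm of $A_h^{\theta/2}\mathcal{P}_h Q_j^{1/2}$. The first factor is estimated by Lemma \ref{lemma0}: noting that $(t_{n+1}-\tau)-k=t_n-\tau$ has absolute value at most $|\tau-t_n|$, the bounds \eqref{eqn3.1}--\eqref{eqn3.2} yield
\[
\bigl\|[C_h(t_{n+1}-\tau)-C_h(k)]A_h^{-\theta/2}\bigr\|+\bigl\|[S_h(t_{n+1}-\tau)-S_h(k)]A_h^{-\theta/2}\bigr\|\leq C|\tau-t_n|^{\theta/2}.
\]
For the second factor I invoke the standard finite-element stability estimate $\|A_h^{\theta/2}\mathcal{P}_h v\|\le C\|A^{\theta/2}v\|$ on $\dot H^{\theta}$, which (applied componentwise through an eigenbasis of $Q_j$) gives $\|A_h^{\theta/2}\mathcal{P}_h Q_j^{1/2}\|_{HS}\le C\|A^{\theta/2}Q_j^{1/2}\|_{HS}$ — this is in the appendix.

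Finally, combining these estimates and integrating the scalar factor $|\tau-t_n|^{\theta}$ over $[t_n,t_{n+1}]$ produces $\int_{t_n}^{t_{n+1}}|\tau-t_n|^{\theta}d\tau=\tfrac{k^{\theta+1}}{\theta+1}$, giving the claimed bound on $\mathbb{E}\|d_1^n\|^2$. The computation for $\mathbb{E}\|d_2^n\|^2$ is identical after swapping the roles of $C_h$ and $S_h$. The main technical obstacle is keeping the finite-element/projection dependence on $h$ uniformly controlled when moving between the continuous operator $A^{\theta/2}$ and the discrete $A_h^{\theta/2}\mathcal{P}_h$; everything else is a routine application of Itô isometry together with the H\"older-type estimates already proved in Lemma \ref{lemma0}.
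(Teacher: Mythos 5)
Your proposal is correct and follows essentially the same route as the paper's own proof: rewrite the increments as stochastic integrals with constant integrands, split with $\|X+Y\|^2\le 2\|X\|^2+2\|Y\|^2$, apply It\^o's isometry, factor through $A_h^{-\theta/2}A_h^{\theta/2}$, invoke the H\"older estimates of Lemma \ref{lemma0} and the stability bound \eqref{eqn2.4}, and integrate $|\tau-t_n|^{\theta}$ to obtain $k^{\theta+1}$. The only cosmetic difference is that the paper first performs the change of variables $\tau\mapsto t_{n+1}-\tau$ to work on $[0,k]$, whereas you observe directly that $(t_{n+1}-\tau)-k=t_n-\tau$; these are equivalent.
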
     
\begin{proof}
   We consider $d^n_1 \text{ with } \theta \in [0,2].$ By Ito's isometry, \eqref{eqn3.1} and \eqref{eqn3.2} we have
\[
\begin{split}
  \mathbb{E}\|d_1^n\|&=\mathbb{E}\bigg\|\int _{t_n}^{t_{n+1}} (C_h(t_{n+1}-\tau)-C_h(k)) \mathcal{P}_hdW_1(\tau)\\
    &\hspace{3cm}-\int _{t_n}^{t_{n+1}} (S_h(t_{n+1}-\tau)-S_h(k)) \mathcal{P}_hdW_2(\tau)\bigg\|\\
    & \leq 2\mathbb{E}\left\|\int _{t_n}^{t_{n+1}} (C_h(t_{n+1}-\tau)-C_h(k)) \mathcal{P}_hdW_1(\tau)\right\|\\
    & \hspace{0cm}+2\mathbb{E}\left\|\int _{t_n}^{t_{n+1}} (C_h(t_{n+1}-\tau)-C_h(k)) \mathcal{P}_hdW_2(\tau)\right\|\\ 
     &=2\int _{t_n}^{t_{n+1}} \left\|(C_h(t_{n+1}-\tau)-C_h(k)) \mathcal{P}_hQ_1^{1/2}\right\|_{HS} ^2d\tau \\
    &+2\int _{t_n}^{t_{n+1}} \left\|(S_h(t_{n+1}-\tau)-S_h(k)) \mathcal{P}_hQ_2^{1/2}\right\|_{HS} ^2d\tau \\
    &=2\int _{0}^k \left\|(C_h(\tau)-C_h(k)) \mathcal{P}_hQ_1^{1/2}\right\|_{HS} ^2d\tau \\
    &+2\int _{0}^k\left \|(S_h(\tau)-S_h(k)) \mathcal{P}_hQ_2^{1/2}\right\|_{HS} ^2d\tau \\
\end{split}
\]   
\[
\begin{split}
     &=2\int _{0}^k \left\|(C_h(\tau)-C_h(k)) A_h^{-\theta /2}(A_h^{\theta /2}\mathcal{P}_h Q_1^{1/2})\right\|_{HS} ^2d\tau \\
    &+2\int _{0}^k \left\|(S_h(\tau)-S_h(k))A_h^{-\theta /2}(A_h^{\theta /2} \mathcal{P}_hQ_2^{1/2})\right\|_{HS} ^2d\tau \\
&\leq   2\int _{0}^k \left\|(C_h(\tau)-C_h(k)) A_h^{-\theta /2}\right\|^2d\tau \left\|A_h^{\theta /2}\mathcal{P}_h Q_1^{1/2}\right\|_{HS} ^2\\
    &+
  2\int _{0}^k \left\|(S_h(\tau)-S_h(k))A_h^{-\theta /2}\right\|^2d\tau\left\|A_h^{\theta /2} \mathcal{P}_hQ_2^{1/2}\right\|_{HS} ^2 \\
  & \leq C\int _0^k |k-\tau|^{\theta }d\tau \left(\left\|A_h^{\theta /2} \mathcal{P}_hQ_1^{1/2}\right\|_{HS}^2+\left\|A_h^{\theta /2} \mathcal{P}_hQ_2^{1/2}\right\|_{HS}\right)\\
  & \leq C k^{\theta +1 } \left(\left\|A_h^{\theta /2} \mathcal{P}_hQ_1^{1/2}\|_{HS}^2+\|A_h^{\theta /2} \mathcal{P}_hQ_2^{1/2}\right\|_{HS}^2\right).
\end{split}
\]
We use \eqref{eqn2.4} with $\gamma =\theta /2 \in [0,1]$ to get an estimate of  $\left\|A_h^{\theta /2} \mathcal{P}_hQ_j^{1/2}\right\|_{HS}$ for $j=1,2$.
\[
\begin{split}
\left\|A_h^{\theta /2} \mathcal{P}_hQ_j^{1/2}\right\|_{HS}& =\left\|A_h^{\theta /2} \mathcal{P}_h A ^{-\theta /2}(A^{\theta /2 }Q_j^{1/2})\right\|_{HS}\\
&\leq \left\|A_h^{\theta /2} \mathcal{P}_h A ^{-\theta /2}\right\|\left\|A^{\theta /2 }Q_j^{1/2}\right\|_{HS}\\
& \leq C \left\|A^{\theta /2 }Q_j^{1/2}\right\|_{HS}
\end{split}
\]
From this, we have 
     \[
     \mathbb{E}\|d_1^n\|^2\leq Ck^{\theta +1}\left(\left\|A^{\theta /2}Q_1^{1/2}\right\|_{HS}^2+ \left\|A^{\theta /2}Q_2^{1/2}\right\|_{HS}^2\right).
     \]
     Similarly, we get 
   \[
     \mathbb{E}\|d_2^n\|^2\leq Ck^{\theta +1}\left(\left\|A^{\theta /2}Q_1^{1/2}\right\|_{HS}^2+ \left\|A^{\theta /2}Q_2^{1/2}\right\|_{HS}^2\right).
     \]
This completes the proof of lemma.
    \end{proof}
 \subsection{\bf{Mean convergence analysis of linear equation \eqref{eqn1.2}}}
 In this subsection, we will prove mean square error bounds for the stochastic trigonometric method \eqref{eqn1.5}.
 \begin{proof}[\bf Proof of Theorem \ref{thm2}] Define $F^n_j:=U^n_j-u_{h,j}(t_n)$ for $j=1,2$
 and $F^n=(F^n_1,F_2^n)$. From \eqref{Dslinear} we get 
 \[
 X_h(t_{n+1})=e^{k\mathbb{A}_h}X_h(t_n)+e^{k\mathbb{A}_h}\mathcal{P}_h\Delta W^n+d^n
 \]
with $d^n:=(d^n_1,d^n_2)$  defined in lemma \ref{lemma1}. Thus we obtain the following formula for the error $F^{n}$
\[
F^{n}=e^{k\mathbb{A}_h}F^{n-1}+d^{n-1}=e^{t_n\mathbb{A}_h}F^0+\sum_{j=0}e^{t_{n-1-j}\mathbb{A}_h}d^j=\sum_{j=0}e^{t_{n-1-j}\mathbb{A}_h}d^j,
\]
 since  $F^0=0$. We calculate the error estimate for the first component of $F^n$.
 \[
 \begin{split}
 \mathbb{E}\|F^n_1\|_{L_2(\mathcal{O})}^2 &=\mathbb{E}\|\sum _{j=0}^{n-1}C_h(t_{n-1-j})d_1^j-S_h(t_{n-1-j})d_2^j\|\\
 &=\mathbb{E} \Bigg[
  \left(\sum _{j=0}^{n-1}C_h(t_{n-1-j})d_1^j,\sum _{i=0}^{n-1}C_h(t_{n-1-i})d_1^i\right)\\
  & \qquad+ \left(\sum _{j=0}^{n-1}C_h(t_{n-1-j})d_1^j,-\sum _{i=0}^{n-1}S_h(t_{n-1-i})d_2^i\right)\\
  &\qquad+ \left(-\sum _{j=0}^{n-1}S_h(t_{n-1-j})d_2^j,\sum _{i=0}^{n-1}C_h(t_{n-1-i})d_1^i\right)\\
  &\qquad+\left(\sum _{j=0}^{n-1}S_h(t_{n-1-j})d_2^j,\sum _{i=0}^{n-1}S_h(t_{n-1-i})d_2^i\right)\Bigg].
  \end{split}
 \]
Here we use the independence of $d_1^i,d_2^j \text{ with } i,j=0,\cdots ,n-1 \text{ for } i \neq j$ and martingle property to get 
\[
\begin{split}
   \mathbb{E}\|F^n_1\|_{L_2(\mathcal{O})}^2 &=\mathbb{E}\Bigg[\sum_{j=0}^{n-1} \left(C_h(t_{n-1-j})d_1^j,C_h(t_{n-1-j})d_1^j\right)+\sum_{j=0}^{n-1} \left(C_h(t_{n-1-j})d_1^j,-S_h(t_{n-1-j})d_2^j\right)\\
  &+\sum_{j=0}^{n-1} \left(-S_h(t_{n-1-j})d_2^j,C_h(t_{n-1-j})d_1^j\right) + \sum_{j=0}^{n-1} \left(S_h(t_{n-1-j})d_2^j,S_h(t_{n-1-j})d_2^j\right)\Bigg]  \\
  &=\sum_{j=0}^{n-1} \mathbb{E}\|C_h(t_{n-1-j})d_1^j-S_h(t_{n-1-j})d_2^j\|_{L_2(\mathcal{O})}^2\\
  &\leq 2\sum_{j=0}^{n-1}\left(\mathbb{E}\|d_1^j\|_{L_2(\mathcal{O})}^2+E\|d_2^j\|_{L_2(\mathcal{O})}^2\right)
\end{split}
\]
Using the estimates of $d_1^j \text{ and } d_2^j$ from the lemma \ref{lemma1}, we get 
\[
\begin{split}
     \mathbb{E}\|F^n_1\|_{L_2(\mathcal{O})}^2 & \leq 2C \sum_{j=0}^{n-1} k^{\theta +1}(\|A^{\theta /2}Q_1^{1/2}\|_{HS}^2+ \|A^{\theta /2}Q_2^{1/2}\|_{HS}^2)\\
     & \leq 2C k^{\theta +1}n(\|A^{\theta /2}Q_1^{1/2}\|_{HS}^2+ \|A^{\theta /2}Q_2^{1/2}\|_{HS}^2)\\
     & \leq C(T)k^{\theta }(\|A^{\theta /2}Q_1^{1/2}\|_{HS}^2+ \|A^{\theta /2}Q_2^{1/2}\|_{HS}^2).
\end{split}
\]
Therefore we obtain 
\[
\begin{split}
 \|U^n_1-u_{h,1}(t_n)\|_{L_2(\Omega, \dot{H}^0)}&=\left(\mathbb{E}\|F^n_1\|_{L_2(\mathcal{O})}^2 \right)^{1/2}\\
 &\leq \left(C(T)k^{\theta }(\|A^{\theta /2}Q_1^{1/2}\|_{HS}^2+ \|A^{\theta /2}Q_2^{1/2}\|_{HS}^2)\right)^{1/2}\\
 & \leq C(T) k^{\theta/2 }(\|A^{\theta /2}Q_1^{1/2}\|_{HS}+ \|A^{\theta /2}Q_2^{1/2}\|_{HS})
 \end{split}
\]
for $\theta \in [0,2]$. Similarly, we get
\[
 \|U^n_2-u_{h,2}(t_n)\|_{L_2(\Omega, \dot{H}^0)} \leq Ck^{\theta /2}(\|A^{\theta /2}Q_1^{1/2}\|_{HS}+ \|A^{\theta /2}Q_2^{1/2}\|_{HS} )
\]
for $\theta \in [0,2].$
 \end{proof}
 \begin{proof}[\bf Proof of Theorem \ref{thm3}]
This proof follows from Theorem \ref{thm1} and \ref{thm2} by the triangular inequality of norm.     
 \end{proof}
\subsection{Mean convergence analysis of semilinear equation \eqref{eqn1.1}}
In this subsection, we prove the proposition \ref{prop1} and Theorem \ref{thm4}. Our result is a global error estimate for the full discretization in Theorem \ref{thm4}. Its proof is based on bounds and H\"{o}lder regularity of the solution in Proposition \ref{prop1}.
\begin{proof}[\bf Proof of Proposition \ref{prop1}]
   We first find the estimate of the norm $A^{\theta/2}u_1(t)$ and similar way we get estimate of the norm $A^{\theta/2}u_2(t)$. From the first component $u_1(t)$ of \eqref{eqn1.7} we  get
 \[
 \begin{split}
A^{\theta/2} u_1(t)&=A^{\theta/2}(C(t)u_{0,1}-S(t)u_{0,2})\\
&+\int_0^t \left[A^{\theta/2}C(t-\tau)g_1(u_1(\tau),u_2(\tau))-A^{\theta/2}S(t-\tau)g_2(u_1(\tau),u_2(\tau))\right]d\tau\\
& +\int _0^t A^{\theta/2}C(t-\tau)f_1(u_1(\tau),u_2(\tau))dW_1(\tau)\\
&-\int _0^t A^{\theta/2}S(t-\tau)f_2(u_1(\tau),u_2(\tau))dW_2(\tau)\\
 &=I_1+I_2+I_3+I_4,
  \end{split}
 \]
 where 
 \[
 \begin{split}
&I_1:=A^{\theta/2}(C(t)u_{0,1}-S(t)u_{0,2}),\\
& I_2:=\int_0^t \left[A^{\theta/2}C(t-\tau)g_1(u_1(\tau),u_2(\tau))-A^{\theta/2}S(t-\tau)g_2(u_1(\tau),u_2(\tau))\right]d\tau,\\
&I_3:=\int _0^t A^{\theta/2}C(t-\tau)f_1(u_1(\tau),u_2(\tau))dW_1(\tau),\\
&I_4:=-\int _0^t A^{\theta/2}S(t-\tau)f_2(u_1(\tau),u_2(\tau))dW_2(\tau).
 \end{split}
 \]
 Taking the expectation of $\left\|A^{\theta/2}u_1(t)\right\| ^2$ we have 
 \[
 \mathbb{E}\left\|A^{\theta/2}u_1(t)\right\| ^2\leq 4\mathbb{E}\|I_1\|^2+4\mathbb{E}\|I_2\|^2+4\mathbb{E}\|I_3\|^2+4\mathbb{E}\|I_4\|^2.
 \]
 Using the fact that $A$ commutes with the sine and  cosine operators together with a condition on the initial value we have 
 \[
 \begin{split}
     \mathbb{E}\|I_1\|^2&= \mathbb{E}\left\|A^{\theta/2} (C(t)u_{0,1}-S(t)u_{0,2})\right\|^2\\
     & \leq 2\mathbb{E} \left\|A^{\theta/2} C(t)u_{0,1}\right\|^2+2\mathbb{E} \left\|A^{\theta/2} S(t)u_{0,2}\right\|^2\\
     & \leq 2 \mathbb{E}\left\|A^{\theta /2}u_{0,1}\right\|^2+2 \mathbb{E}\left\|A^{\theta /2}u_{0,2}\right\|^2\\
     & \leq C_1\mathbb{E}|||X_0|||_{\theta}.
     \end{split}
 \]

\noindent To estimate the second term, we get
 \[
 \begin{split}
 \mathbb{E}\|I_2\|^2 &=\mathbb{E}\left\|\int _0^t A ^{\theta/2} C(t-\tau)g_1(u_1(\tau),\,u_2(\tau))d\tau-\int _0^t A ^{\theta/2} S(t-\tau)g_2(u_1(\tau),\,u_2(\tau))d\tau\right\|^2\\
 &\leq 2t\mathbb{E} \int_0^t\left \|A^{\theta /2}C(t-\tau)g_1(u_1(\tau),\, u_2(\tau))\right\|^2d\tau\\
 &+2t\mathbb{E}  \int_0^t \left\|A^{\theta /2}C(t-\tau)g_1(u_1(\tau),\, u_2(\tau))\right\|^2d\tau\\
 & \leq 2t \int_0^t \mathbb{E}\left[\left\|A ^{\theta/2}g_1(u_1(\tau),\,u_2(\tau))\right\|^2+\left\|A ^{\theta/2}g_1(u_1(\tau),\,u_2(\tau))\right\|^2\right]d\tau\\
 & \leq 4t L_g \int_0^t \mathbb{E}\left[\left\|A^{\theta/2}u_1(\tau)\|^2+ \|A^{\theta/2}u_2(\tau)\right\|^2+1 \right]d\tau.
 \end{split}
 \] 
 Using It\^{o} isometry to estimate the third term, we have 
 \[
 \begin{split}
 \mathbb{E}\|I_3\|^2 &=\mathbb{E}\left\|\int_0^tA^{\theta /2}C(t-\tau)f_1(u_1(\tau),u_2(\tau))dW_1(\tau)\right\|^2\\
 & =\mathbb{E}\int_0^t\left\|A^{\theta /2}C(t-\tau)f_1(u_1(\tau),u_2(\tau))Q^{1/2}_1\right\|^2 _{HS}d\tau\\
 &\leq L_f\int_0^t \mathbb{E} \left(\left\|u_1(\tau)\right\|_{\theta}^2+\left\|u_2(\tau)\right\|_{\theta }^2+1\right)d\tau.
 \end{split}
 \]
 Similar to the above estimate we have 
 \[
 \begin{split}
 \mathbb{E}\|I_4\|^2&=\mathbb{E}\left\|\int_0^t A^{\theta/2}S(t-\tau)f_2(u_1(\tau),u_2(\tau))dW_2(\tau)\right\|^2 \\
 &\leq L_f\int_0^t \mathbb{E} \left(\left\|u_1(\tau)\right\|_{\theta}^2+\left\|u_2(\tau)\right|_{\theta }^2+1\right)d\tau.
 \end{split}
 \]
 We collect all of the above estimates, and we see
 \[
 \begin{split}
 \mathbb{E}\left\|A^{\theta/2}u_1(t)\right\|^2&\leq C_1\mathbb{E}|||X_0|||_{\theta}+\left(4tL_g+2L_f\right) \int_0^t \mathbb{E}\left[\left\|u_1(\tau)\right\|^2_{\theta}+ \left\|u_2(\tau)\right\|^2_{\theta} +1\right]d\tau\\
 &\leq C_1\mathbb{E}|||X_0|||_{\theta}+tC_2+C_2\int_0^t \mathbb{E}\left[\left\|u_1(\tau)\right\|^2_{\theta}+ \left\|u_2(\tau)\right\|^2_{\theta} \right]d\tau
 \end{split}
 \]
 Similarly, we get 
  \[
 \mathbb{E}\left\|A^{\theta/2}u_2(t)\right\|^2\leq C_3\mathbb{E}|||X_0|||_{\theta}+C_4 t+C_4 \int_0^t \mathbb{E}\left[\left\|u_1(\tau)\right\|^2_{\theta}+ \left\|u_2(\tau)\right\|^2_{\theta} \right]d\tau.
 \]
 We sum the above estimates and an application of Gronwall's lemma, we see
 \[
 \begin{split}
  \mathbb{E}\left[\left\|u_1(t)\right\|^2_{\theta}+\left\|u_2(t)\right\|^2_{\theta}\right] &\leq \tilde{C_1}+\tilde{C_2}\int_0^t \mathbb{E}\left[\left\|u_1(\tau)\right\|^2_{\theta}+ \left\|u_2(\tau)\right\|^2_{\theta} \right]d\tau\\
  & \leq \tilde{C_1}e^{\tilde{C_2}t}
  \end{split}
 \]
 Taking the supremum on $[0,T]$, we get 
 \[
 \sup_{0 \leq t \leq T}  \mathbb{E}\left[\left\|u_1(t)\right\|^2_{\theta}+\left\|u_2(t)\right\|^2_{\theta}\right]\leq C.
 \]
We now prove an H\"{o}lder regularity property of the solution. We first find an H\"{o}lder regularity property of the first component $u_1(t)$ of \eqref{eqn1.7} and in a similar way we get an H\"{o}lder regularity property of the second component $u_2(t)$. We  write, for $0 \leq s\leq t \leq T$,
\[
\begin{split}
    u_1(t)-u_1(s)&=(C(t)-C(s))u_{0,1}-(S(t)-S(s))u_{0,2}\\
    &+\int_0^s\left(C(t-r)-C(s-r)\right)g_1(u_1(r),u_2(r))dr\\
    &+ \int_s^tC(t-r) g_1(u_1(r),u_2(r))dr-\int_0^s(S(t-r)-S(s-r))g_2(u_1(r),u_2(r))dr\\
    &- \int_s^tS(t-r) g_2(u_1(r),u_2(r))dr\\
    &+ \int_0^s (C(t-r)-C(s-r))f_1(u_1(s),u_2(s))dW_1(r)\\
    &+\int_s^t C(t-r)f_1(u_1(s),u_2(s))dW_1(r)
    \\
    &-\int_0^s (S(t-r)-S(s-r))f_2(u_1(s),u_2(s))dW_2(r)\\
    &-\int_s^t S(t-r)f_2(u_1(s),u_2(s))dW_2(r)\\
    &:=J_1+J_2+J_3+J_4+J_5+J_6+J_7+J_8+J_9+J_{10}.
\end{split}
\]
Taking the expectation of $\|u_1(t)-u_1(s)\|^2$, we get
\[
\begin{split}
\mathbb{E}\|u_1(t)-u_1(s)\|^2 & \leq 10\mathbb{E}\|J_1\|^2+10\mathbb{E}\|J_2\|^2+10\mathbb{E}\|J_3\|^2+10\mathbb{E}\|J_4\|^2+10\mathbb{E}\|J_5\|^2\\&+10
\mathbb{E}\|J_6\|^2+10\mathbb{E}\|J_7\|^2
+10\mathbb{E}\|J_8\|^2+10\mathbb{E}\|J_9\|^2
+10\mathbb{E}\|J_{10}\|^2.
\end{split}
\]

To estimate the first term, we use \eqref{eqn3.3} to get
\[
\begin{split}
\mathbb{E}\|J_1\|^2&=\mathbb{E}\left\|(C(t)-C(s))u_{0,1}\right\|^2=\mathbb{E}\left\|(C(t)-C(s))A^{-\theta/2}(A^{\theta /2}u_{0,1})\right\|^2\\
&\leq \left\|(C(t)-C(s))A^{-\theta/2}\right\|^2\left\|A^{\theta /2}u_{0,1}\right\|^2 \leq C|t-s|^{\theta}\mathbb{E}\left\|u_{0,1}\right\|^2_{\theta}.
\end{split}
\]
Similar to the above we have 
\[
\mathbb{E}\|J_2\|^2=\mathbb{E}\left\|(S(t)-S(s))u_{0,2}\right\|^2\leq C|t-s|^{\theta}\mathbb{E}\|u_{0,2}\|^2_{\theta}.
\]
To estimate the third term, we use \eqref{eqn3.3} to get 
\[
\begin{split}
    \mathbb{E}\|J_3\|^2&=\mathbb{E}\left\|\int_0^s(C(t-r)-C(s-r))g_1(u_1(r),\,u_2(r))dr\right\|^2\\
    & \leq s \int_0^s \mathbb{E}\left \|(C(t-r)-C(s-r))g_1(u_1(r),\,u_2(r))\right\|^2dr\\
    &\leq T \int_0^s \mathbb{E}\left \|(C(t-r)-C(s-r))A^{-\theta/2}(A^{\theta/2}g_1(u_1(r),\,u_2(r)))\right\|^2dr\\
    & \leq C|t-s|^{\theta } \int_0^s \mathbb{E}\left\|A^{\theta/2}g_1(u_1(r),\,u_2(r))\right\|^2dr\\
    &\leq CL_g|t-s|^{\theta} \sup_{0 \leq t \leq T}\mathbb{E}\left[\left\|u_1(t)\right\|^2_{\theta}+\left\|u_2(t)\right\|^2_{\theta}+1\right].
\end{split}
\]
To estimate the fourth term, we use the operator 
 norm bound of the cosine operator to get 
\[
\begin{split}
    \mathbb{E}\|J_4\|^2&=\mathbb{E}\left\|\int_s^t C(t-r)g_1(u_1(r),\,u_2(r))dr\right\|^2\\
    & \leq(t-s)\int_s^t \mathbb{E}\left\|C(t-r)g_1(u_1(r),\,u_2(t))\right\|^2dr\\
    & \leq T  \int_s^t \mathbb{E}\left\|g_1(u_1(r),\,u_2(r))\right\|^2dr\\
    &\leq T L_g\int_s^t \mathbb{E}\left[\left\|u_1(r)\right\|^2+\left\|u_2(r)\right\|^2+1\right]dr\\
    &\leq CL_g|t-s| \sup_{0 \leq t \leq T}\mathbb{E}\left[\left\|u_1(t)\right\|^2_{\theta}+\left\|u_2(t)\right\|^2_{\theta}+1\right].
\end{split}
\]
Similar to the third estimate we have 
\[
\begin{split}
\mathbb{E} \|J_5\|^2&=\mathbb{E}\left\|\int_0^s S(t-r)-S(s-r)g_2(u_1(r),u_2(r))dr\right\|^2\\
& \leq  CL_g|t-s|^{\theta} \sup_{0 \leq t \leq T}\mathbb{E}\left[\left\|u_1(t)\right\|^2_{\theta}+\left\|u_2(t)\right\|^2_{\theta}+1\right].
\end{split}
\]
For an estimate of the sixth  term, we use \eqref{eqn3.3} to get
\[
\begin{split}
\mathbb{E}\|J_6\|^2&=\mathbb{E}\left\|\int_s^tS(t-r)g_2(u_1(r),u_2(r))dr\right\|^2\\
&\leq C\int_s^t \mathbb{E} \left\|S(t-r)A^{-\theta/2}(A^{\theta/2}g_2(u_1(r),u_2(r)))\right\|^2dr\\
&\leq C \int_s^t \left\|S(t-r)A^{-\theta/2}\right\|^2\mathbb{E}\left\|A^{\theta/2}g_2(u_1(r),u_2(r))\right\|^2dr\\
& \leq CL_g|t-s|^{\theta} \sup_{0 \leq t \leq T}\mathbb{E}\left[\left\|u_1(t)\right\|^2_{\theta}+\left\|u_2(t)\right\|^2_{\theta}+1\right].
\end{split}
\]
To estimate the seventh term, we It\^{o} isometry and \eqref{eqn3.3} to get 
\[
\begin{split}
\mathbb{E}\|J_7\|^2&=\mathbb{E}\left\|\int_0^s(C(t-r)-C(s-r))f_1(u_1(r),u_2(r))dW_1(r)\right\|^2\\
& =\mathbb{E}\int _0^s\left\|(C(t-r)-C(s-r))f_1(u_1(r),u_2(r))Q_1^{1/2}\right\|_{HS}^2dr\\
& \leq C|t-s|^{\theta }\int_0^t \mathbb{E}\left\|A^{\theta/2}f_1(u_1(r),u_2(r))Q_1^{1/2}\right\|dr\\
& \leq CL_f|t-s|^{\theta}\sup_{0 \leq t \leq T}\left(\|u_1(t)\|_{\theta}^2+\|u_2(t)\|_{\theta}^2+1\right).
\end{split}
\]
We use the operator norm bound of the cosine operator and It\^{o} isometry to get 
\[
\begin{split}
\mathbb{E}\|J_8\|^2&=\mathbb{E}\left\|\int_s^tC(t-r)f_1(u_1(r),u_2(r))Q_1^{1/2}dW_1(r)\right\|^2\\
&=\mathbb{E}\int_s^t\left\|C(t-r)f_1(u_1(r),u_2(r))Q^{1/2}_1\right\|^2_{HS}\\
& \leq \int_s^t \mathbb{E}\left\|A^{\theta/2}f_1(u_1(r),u_2(r))Q^{1/2}_1\right\|^2_{HS}\\
& \leq |t-s|L_f\sup_{0 \leq t \leq T}\left(\|u_1(t)\|_{\theta}^2+\|u_2(t)\|_{\theta}^2+1\right).
\end{split}
\]
Similar to the seventh, we get 
\[
\begin{split}
 \mathbb{E}\|J_9\|^2&=\mathbb{E}\left\|\int_0^s(S(t-r)-s(s-r))f_2(u_1(r),u_2(r))dW_2(r)\right\|^2\\ 
 & \leq CL_f|t-s|^{\theta}\sup_{0 \leq t \leq T}\left(\|u_1(t)\|_{\theta}^2+\|u_2(t)\|_{\theta}^2+1\right).
\end{split}
\]
For the last estimate, we have 
\[
\begin{split}
\mathbb{E}\|J_{10}\|^2&=\mathbb{E}\left\|\int_s^t S(t-r)f_2(u_1(r),u_2(r))dW_2(r)\right\|^2\\
&=\mathbb{E}\int_s^t \left\|S(t-r)f_2(u_1(r),u_2(r))Q_2^{1/2}\right\|^2_{HS}ds\\
&\leq |t-s|^{\theta}L_g\sup_{0 \leq t \leq T}(\|u_1(t)\|_{\theta}^2+\|u_2(t)\|_{\theta}^2+1).
\end{split}
\]
Collecting the above estimates, we get 
\[
\begin{split}
    \mathbb{E}\|u_1(t)-u_1(s)\|^2&\leq C_1|t-s|^{\theta}\left(\left\|u_{0,1}\right\|^2_{\theta}+\left\|u_{0,2}\right\|^2_{\theta}+\sup_{0 \leq t \leq T}\mathbb{E}\left(\|u_1(t)\|^2_{\theta}+\|u_2(t)\|^2_{\theta}+1\right)\right)\\
    &+C_2|t-s|\left(\sup_{0 \leq t \leq T}\mathbb{E}[\|u_1(t)\|^2_{\theta}+\|u_2(t)\|^2_{\theta}+1]\right).
\end{split}
\] 
This finishes the proof.
\end{proof}
\begin{proof}[\bf Proof of Theorem \ref{thm4}]
 We see the error $U^n-X(t_n)$ between the numerical and the exact solutions where $U^n$ is given in \eqref{rformula} and $X(t_n)$ is obtained from \eqref{eqn1.7}. The error estimate is provided by 
\[
\begin{split}
\mathbb{E}|||U^n-X(t_n)|||^2&=\mathbb{E}\bigg|\bigg|\bigg|\left(e^{t_{n}\mathbb{A}_h}U^0-e^{t_{n}\mathbb{A}}X_0\right)\\
&\qquad+\left(\sum_{j=0}^{n-1}e^{(t_{n}-t_j)\mathbb{A}_h}\mathcal{P}_hG(U^j)k-\int _0^{t_n}e^{(t_{n}-\tau)\mathbb{A}}G(X_\tau)d\tau\right)\\
&\qquad +\left(\sum_{j=0}^{n-1}e^{(t_{n}-t_j)\mathbb{A}_h}\mathcal{P}_hF(U^j)\Delta W^j-\int_0^{t_n}e^{(t_{n}-\tau)\mathbb{A}}F(X_{\tau})dW(\tau)\right)\bigg|\bigg|\bigg|^2\\
&\leq 3\mathbb{E}|||e^{t_{n}\mathbb{A}_h}U^0-e^{t_{n}\mathbb{A}}X_0|||^2\\
\end{split}
\]
\[
\begin{split}
~~~~~~~~~~~~&+3\mathbb{E}\bigg|\bigg|\bigg|\sum_{j=0}^{n-1}\int _{t_j}^{t_{j+1}}\left(e^{(t_{n}-t_j)\mathbb{A}_h}\mathcal{P}_hG(U^j)-e^{(t_{n}-\tau)\mathbb{A}}G(X_\tau)\right)d\tau\bigg|\bigg|\bigg|^2\\
&+3\mathbb{E}\bigg|\bigg|\bigg|\sum_{j=0}^{n-1}\int _{t_j}^{t_{j+1}}\left(e^{(t_{n}-t_j)\mathbb{A}_h}\mathcal{P}_hF(U^j)-e^{(t_{n}-\tau)\mathbb{A}})F(X(\tau)\right)dW(\tau)\bigg|\bigg|\bigg|^2\\
&:=3\mathbb{E}|||Err_0|||^2+3\mathbb{E}|||Err_d|||^2+3\mathbb{E}|||Err_s|||^2.
\end{split}
\]
We now estimate the above three errors.

\textbf{Estimate for the initial error $Err_0$}. We use Theorem $1.3$ of \cite{finite}.
Error estimate in the first component of $Err_0$ is given by 
\[
\begin{split}
\mathbb{E}\|Err_{0,1}\|^2&=\mathbb{E}\|(C_h(t_n)\mathcal{P}_h-C(t_n))u_{0,1}-(S_h(t_n)\mathcal{P}_h-S(t_n))u_{0,2}\|^2\\
& \leq C^2h^{2\theta}\mathbb{E}(\|u_{0,1}\|^2_{\theta}+\|u_{0,2}\|^2_{\theta})
\end{split}
\]
for $\theta \in [0,2]$.
Similarly, for the second component
\[
\mathbb{E}\|Err_{0,2}\|^2 \leq  C^2h^{2\theta}\mathbb{E}(\|u_{0,1}\|^2_{\theta}+\|u_{0,2}\|^2_{\theta})
\]
for $\theta \in [0,2]$.

\textbf{Estimate for the deterministic error $Err_d.$} We write the deterministic error $Err_d$ as 
\[
\begin{split}
    Err_d&=\sum_{j=0}^{n-1}\int _{t_j}^{t_{j+1}}(e^{(t_n-t_j)\mathbb{A}_h}\mathcal{P}_hG(U^j)-e^{(t_n-\tau)\mathbb{A}}G(X(\tau)))d\tau\\
    &=\sum_{j=0}^{n-1} \int_{t_j}^{t_{j+1}}e^{(t_n-t_j)\mathbb{A}_h}\mathcal{P}_h(G(U^j)-G(X(t_j)))d\tau\\
    &+\sum_{j=0}^{n-1} \int_{t_j}^{t_{j+1}}e^{(t_n-t_j)\mathbb{A}_h}\mathcal{P}_h(G(X(t_j))-G(X(\tau)))d\tau\\
    &+\sum_{j=0}^{n-1} \int_{t_j}^{t_{j+1}}(e^{(t_n-t_j)\mathbb{A}_h}\mathcal{P}_h-e^{(t_n-t_j)\mathbb{A}})G(X(\tau))d\tau\\
    &+\sum_{j=0}^{n-1} \int_{t_j}^{t_{j+1}}(e^{(t_n-t_j)\mathbb{A}}-e^{(t_n-\tau)\mathbb{A}})G(X(\tau))d\tau\\
    &:=\tilde{I}_1+\tilde{I}_2+\tilde{I}_3+\tilde{I}_4
\end{split}
\]
We next estimate the second moment of each term in the above equation. In the first component of the first term $\tilde{I}_1$, we see
\[
\begin{split}
   \left( \mathbb{E}\|\tilde{I}_{1,1}\|^2\right)^{1/2}&=\bigg(\mathbb{E}\bigg\|\sum_{j=0}^{n-1}\int _{t_j}^{t_{j+1}}\bigg[C_h(t_n-t_j)\mathcal{P}_h(g_1(U_1^j,\,U_2^j)-g_1(u_1(t_j),\,u_2(t_j)))\\ &\hspace{3cm}-S_h(t_n-t_j)\mathcal{P}_h(g_2(U_1^j,\,U_2^j)
   -g_2(u_1(t_j),\,u_2(t_j)))\bigg]d\tau \bigg\|^2\bigg)^{1/2}\\
   & \leq \sum_{j=0}^{n-1} \int_{t_j}^{t_{j+1}}\left(\mathbb{E}\|C_h(t_n-t_j)\mathcal{P}_h(g_1(U_1^j,\,U_2^j)-g_1(u_1(t_j),\,u_2(t_j)))\|^2\right)^{1/2}d\tau\\
\end{split}
\]
\[
\begin{split}
~~~~~~~~~~~~~~~~~~~~~~~~~~~&+\sum_{j=0}^{n-1} \int_{t_j}^{t_{j+1}}\left(\mathbb{E}\|S_h(t_n-t_j)\mathcal{P}_h(g_2(U_1^j,\,U_2^j)-g_2(u_1(t_j),\,u_2(t_j)))\|^2\right)^{1/2}d\tau\\
   & \leq k\sum_{j=0}^{n-1} \bigg[\left(\mathbb{E}\|g_1(U_1^j,\,U_2^j)-g_1(u_1(t_j),\,u_2(t_j))\|^2\right)^{1/2}\\
   & \hspace{3.4cm}+\left(\mathbb{E}\|g_2(U_1^j,\,U_2^j)-g_2(u_1(t_j),\,u_2(t_j))\|^2\right)^{1/2}\bigg]\\
   & \leq \sqrt{2}k \sum_{j=0}^{n-1}\bigg(\mathbb{E}\|g_1(U_1^j,\,U_2^j)-g_1(u_1(t_j),\,u_2(t_j))\|^2\\
   & \hspace{3.4cm}+\mathbb{E}\|g_2(U_1^j,\,U_2^j)-g_2(u_1(t_j),\,u_2(t_j))\|^2\bigg)^{1/2}
\end{split}
\]
so that
\[
\begin{split}
     \mathbb{E}\left\|\tilde{I}_{1,1}\right\|^2& \leq 2k^2n \sum_{j=0}^{n-1}\bigg(\mathbb{E}\left\|g_1(U_1^j,\,U_2^j)-g_1(u_1(t_j),\,u_2(t_j))\right\|^2\\
     &\hspace{4cm}+\mathbb{E}\left\|g_2(U_1^j,\,U_2^j)-g_2(u_1(t_j),\,u_2(t_j))\right\|^2\bigg)\\
     & \leq Ck\sum_{j=0}^{n-1}\bigg(\mathbb{E}\left\|g_1(U_1^j,\,U_2^j)-g_1(u_1(t_j),\,u_2(t_j))\right\|^2\\
     &\hspace{4cm}+\mathbb{E}\left\|g_2(U_1^j,\,U_2^j)-g_2(u_1(t_j),\,u_2(t_j))\right\|^2\bigg)\\
     & \leq CK_gk\sum_{j=0}^{n-1}\mathbb{E}\left(\left\|U_1^j-u_1(t_j)\right\|^2+\left\|U_2^j-u_2(t_j)\right\|^2\right).
\end{split}
\]
similarly for the second component we have 
\[
\mathbb{E}\|\tilde{I}_{1,2}\|^2\leq CK_gk\sum_{j=0}^{n-1}\mathbb{E}\left(\left\|U_1^j-u_1(t_j)\right\|^2+\left\|U_2^j-u_2(t_j)\right\|^2\right).
\]
  For the first component of the second term $\tilde{I}_2$, we have 
  \[
  \begin{split}
\left(\mathbb{E}\left\|\tilde{I}_{2,1}\right\|^2\right)^{1/2}&=\bigg(\mathbb{E}\bigg\|\sum_{j=0}^{n-1}\int_{t_j}^{t_{j+1}}\bigg[C_h(t_n-t_j)\mathcal{P}_h\left(g_1(u_1(t_j),\, u(t_j))-g_1(u_1(\tau),u_2(\tau))\right)\\
    & \hspace{2cm}-S_h(t_n-t_j)\mathcal{P}_h(g_2(u_1(t_j),\, u(t_j))-g_2(u_1(\tau),u_2(\tau)))\bigg]\bigg\|^2\bigg)^{1/2}d\tau\\
    & \leq \sum_{j=0}^{n-1} \int_{t_j}^{t_{j+1}}\left(\mathbb{E}\left\|g_1(u_1(t_j),\, u_2(t_j))-g_1(u_1(\tau),\, u_2(\tau))\right\|^2\right)^{1/2}d\tau\\
    & +\sum_{j=0}^{n-1} \int_{t_j}^{t_{j+1}}\left(\mathbb{E}\left\|g_2(u_1(t_j),\, u_2(t_j))-g_2(u_1(\tau),\, u_2(\tau))\right\|^2\right)^{1/2}d\tau\\  
  \end{split}
  \]
\[
\begin{split}
    &\leq \sqrt{2}K_g \sum_{j=0}^{n-1} \int_{t_j}^{t_{j+1}}\left(\mathbb{E}\left[\|u_1(t_j)-u_1(\tau)\|^2+\|u_2(t_j)-u_2(\tau)\|^2\right]\right)^{1/2}d\tau\\
    & \leq 2K_gK_h\sum_{j=0}^{n-1}\int_{t_j}^{t_{j+1}}|t_j-\tau|^{\min(\theta/2,1/2)} d\tau\\
    & \leq Ck^{\min (\theta/1,1/2)}
    \end{split}
\]
for $\theta \in [0,2]$ and $K_h$ is a H\"{o}lder constant. Above we have used the H\"{o}lder regularity of $u_1 \text{ and } u_2$ from Proposition \ref{prop1} and the value of $k$ less than one. Thus we have 
\[
\mathbb{E}\|\tilde{I}_{2,1}\|^2\leq Ck^{\min (\theta ,1)}.
\] 
Similarly, for the second component, we get
\[
\mathbb{E}\|\tilde{I}_{2,2}\|^2\leq C k^{\min(\theta,1)}.
\]
For the first component of the third term $\tilde{I}_3$ we use Theorem $1.3$ of \cite{finite} to get 
\[
\begin{split}
    \left(\mathbb{E}\left\|\tilde{I}_{3,1}\right\|^2\right)^{1/2}&\leq\sum_{j=0}^{n-1} \int_{t_j}^{t_{j+1}}\left(\mathbb{E}\left\|(C_h(t_n-t_j)\mathcal{P}_h-C(t_n-t_j))g_1(u_1(\tau),u_2(\tau))\right\|^2\right)^{1/2}d\tau\\
    &+\sum_{j=0}^{n-1} \int_{t_j}^{t_{j+1}}\left(\mathbb{E}\|(S_h(t_n-t_j)\mathcal{P}_h-S(t_n-t_j))g_2(u_1(\tau),u_2(\tau))\|^2\right)^{1/2}d\tau\\
    &\leq Ch^{\theta}\sum_{j=0}^{n-1} \int_{t_j}^{t_{j+1}}\left[\left(\mathbb{E}\|g_1(u_1(\tau),\,u_2(\tau))\|_{\theta}^2\right)^{1/2}+(\mathbb{E}\|g_2(u_1(\tau),\,u_2(\tau))\|_{\theta}^2)^{1/2}\right]d\tau\\
    & \leq Ch^{\theta}\sum_{j=0}^{n-1}\int_{t_j}^{t_{j+1}} \left[\mathbb{E}\|g_1(u_1(\tau),\,u_2(\tau))\|_{\theta}^2+\mathbb{E}\|g_2(u_1(\tau),\,u_2(\tau))\|_{\theta}^2\right]^{1/2}d\tau\\
    & \leq CL_gh^{\theta} \sum_{j=0}^{n-1} \int_{t_j}^{t_{j+1}}\left(\mathbb{E}(\|u_1(\tau)\|_{\theta}^2+\|u_2(\tau)\|_{\theta}+1\right)^{1/2}d\tau\\
    & \leq CL_gh^{\theta}(nk)\left(\sup_{0 \leq t \leq T}\mathbb{E}(\|u_1(t)\|^2_{\theta}+\|u_1(t)\|^2_{\theta})+1\right)^{1/2}
\end{split}
\]
\noindent for $\theta \text{ in } [0,2]$. Thus,
\[
\mathbb{E}\|\tilde{I}_{3,1}\|^2 \leq Ch^{2 \theta}\sup_{0 \leq t \leq T} \mathbb{E}\left(\|u_1(t)\|^2_{\theta}+\|u_2(t)\|^2_{\theta}+1\right).
\]
\noindent Similarly, for the second component, we get
\[
\mathbb{E}\|\tilde{I}_{3,2}\|^2 \leq Ch^{2 \theta}\sup_{0 \leq t \leq T} \mathbb{E}(\|u_1(t)\|^2_{\theta}+\|u_2(t)\|^2_{\theta}+1).
\]
For the first component of the last term $\tilde{I}_4$, we use \eqref{eqn3.3} to get 
\[
\begin{split}
    \left(\mathbb{E}\left\|\tilde{I}_{4,1}\right\|^2\right)^{1/2}& \leq \sum_{j=0}^{n-1}\int_{t_j}^{t_{j+1}} \left(\mathbb{E}\left\|(C(t_n-t_j)-C(t_n-\tau))g_1(u_1(\tau),u_2(\tau))\right\|^2\right)^{1/2}d\tau\\
    &+\sum_{j=0}^{n-1}\int_{t_j}^{t_{j+1}} \left(\mathbb{E}\left\|(S(t_n-t_j)-S(t_n-\tau))g_2(u_1(\tau),u_2(\tau))\right\|^2\right)^{1/2}d\tau\\
    & \leq \sum_{j=0}^{n-1}\int_{t_j}^{t_{j+1}}C|\tau-t_j|^{\theta/2}\bigg[\left(\mathbb{E}\|A^{\theta/2}g_1(u_1(\tau),\, u_2(\tau))\|^2\right)^{1/2}\\
    & \hspace{3.2cm}+\left(\mathbb{E}\|A^{\theta/2}g_2(u_1(\tau),\, u_2(\tau))\|^2\right)^{1/2}\bigg]d\tau\\
    & \leq CL_gk^{\theta/2} \left(\sup_{0 \leq t \leq T}\mathbb{E}\left(\|u_1(t)\|^2_{\theta}+\|u_2(t)\|^2_{\theta}+1\right)\right)^{1/2}
\end{split}
\]
for $\theta \in [0,2]$. Thus,
\[
\mathbb{E}\|\tilde{I}_{4,1}\|^2\leq Ck^{\theta}\sup_{0 \leq t \leq T}\mathbb{E}(\|u_1(t)\|^2_{\theta}+\|u_2(t)\|^2_{\theta}+1).
\]
Similarly, for the second component we have 
\[
\mathbb{E}\|\tilde{I}_{4,2}\|^2 \leq Ck^{\theta} \sup_{0 \leq t \leq T}\mathbb{E}(\|u_1(t)\|^2_{\theta}+\|u_2(t)\|^2_{\theta}+1).
\]
By consolidating all the preceding estimates for the deterministic error, we obtain the expectation of the norm bound for the first component of $Err_d$,  
 given by:
\[
\mathbb{E}\|Err_{d,1}\|^2\leq C\left[h^{2\theta}+k^{min(\theta ,1)}+k \sum_{j=0}^{n-1}\mathbb{E}\left(\left\|U_1^j-u_1(t_j)\right\|^2+ \left\|U_2^j-u_2(t_j)\right\|^2\right)\right] \text{ for } \theta\in [0,2].
\]
Here, we have used the fact that $k < 1$ and applied the upper bound property from Proposition \ref{prop1}.

Similarly, the expectation of the norm bound for the second component of $Err_d$ is expressed as:
\[
\mathbb{E}\|Err_{d,2}\|^2\leq C\left[h^{2\theta}+k^{min(\theta ,1)}+k \sum_{j=0}^{n-1}\mathbb{E}\left(\left\|U_1^j-u_1(t_j)\right\|^2+ \left\|U_2^j-u_2(t_j)\right\|^2\right)\right]  \text{ for } \theta\in [0,2].
\]
\textbf{Estimate for the stochastic error $Err_s$.} We write stochastic error as 
\[
\begin{split}
Err_s &=\sum_{j=0}^{n-1} \int_{t_j}^{t_{j+1}}\left(e^{(t_n-t_j)\mathbb{A}_h}\mathcal{P}_hF(U^j)-e^{(t_n-\tau)\mathbb{A}}\right)F(X(\tau))dW(\tau)\\
&=\sum_{j=0}^{n-1} \int_{t_j}^{t_{j+1}}e^{(t_n-t_j)\mathbb{A}_h}\mathcal{P}_h(F(U^j)-F(X(t_j)))dW(\tau)\\
&+\sum_{j=0}^{n-1} \int_{t_j}^{t_{j+1}}e^{(t_n-t_j)\mathbb{A}_h}\mathcal{P}_h(F(X(t_j))-F(X(\tau)))dW(\tau)
\\&+\sum_{j=0}^{n-1}\int_{t_j}^{t_{j+1}}(e^{(t_n-t_j)\mathbb{A}_h}\mathcal{P}_h-e^{(t_n-t_j)\mathbb{A}})F(X(\tau))dW(\tau)\\
&+\sum_{j=0}^{n-1}\int_{t_j}^{t_{j+1}}(e^{(t_n-t_j)\mathbb{A}}\mathcal{P}_h-e^{(t_n-\tau)\mathbb{A}})F(X(\tau))dW(\tau)\\
&:=\tilde{J}_1+\tilde{J}_2+\tilde{J}_3+\tilde{J}_4.
\end{split}
\]
For the first component of the first term 
$\tilde{J}_1$, we apply the Itô isometry, the boundedness of 
$C_h$
  and 
$\mathcal{P}_h$
 , and the conditions of the functions 
$f_1$
  and 
$f_2$
  given in \eqref{assum} to obtain:
 \[
 \begin{split}
     \mathbb{E}\|\tilde{J}_{1,1}\|^{2}&=  \mathbb{E}\bigg\|\sum_{j=0}^{n-1}\int_{t_j}^{t_{j+1}}C_h(t_n-t_j)\mathcal{P}_h(f_1(U^j_1,U^j_2)-f_1(u_1(t_j),u_2(t_j)))dW_1(\tau)\\
     &\qquad-\sum_{j=0}^{n-1}\int_{t_j}^{t_{j+1}}S_h(t_n-t_j)\mathcal{P}_h(f_2(U^j_1,U^j_2)-f_2(u_1(t_j),u_2(t_j)))dW_2(\tau)\bigg\|^2\\
     &\leq 2\mathbb{E}\left\|\sum_{j=0}^{n-1}\int_{t_j}^{t_{j+1}}C_h(t_n-t_j)\mathcal{P}_h(f_1(U^j_1,U^j_2)-f_1(u_1(t_j),u_2(t_j)))dW_1(\tau)\right\|^2\\
     &+2\mathbb{E}\left\|\sum_{j=0}^{n-1}\int_{t_j}^{t_{j+1}}S_h(t_n-t_j)\mathcal{P}_h(f_2(U^j_1,U^j_2)-f_2(u_1(t_j),u_2(t_j)))dW_2(\tau)\right\|^2\\
    & =2\mathbb{E}\sum_{j=0}^{n-1}\int_{t_j}^{t_{j+1}}\left\|C_h(t_n-t_j)\mathcal{P}_h(f_1(U^j_1,U^j_2)-f_1(u_1(t_j),u_2(t_j)))Q^{1/2}_1\right\|^2_{HS}d(\tau)\\
    &+2\mathbb{E}\sum_{j=0}^{n-1}\int_{t_j}^{t_{j+1}}\left\|S_h(t_n-t_j)\mathcal{P}_h(f_2(U^j_1,U^j_2)-f_2(u_1(t_j),u_2(t_j)))Q^{1/2}_2\right\|^2_{HS}d(\tau)\\
    &\leq 2k\sum_{j=0}^{n-1}\mathbb{E}\left\|(f_1(U^j_1,U^j_2)-f_1(u_1(t_j),u_2(t_j))Q^{1/2}_1\right\|^2_{HS}d\tau\\
    &+2k\sum_{j=0}^{n-1}\mathbb{E}\left\|(f_1(U^j_1,U^j_2)-f_2(u_1(t_j),u_2(t_j))Q^{1/2}_2\right\|^2_{HS}d\tau\\
    &\leq kC\sum_{j=0}^{n-1}\mathbb{E}\left(\left\|U^j_1-u_1(t_j)\right\|^2+\left\|U^j_2-u_2(t_j)\right\|^2\right)
 \end{split}
 \]
Similarly for the second component we have 
\[
 \mathbb{E}\|\tilde{J}_{1,2}\|^2\leq kC\sum_{j=0}^{n-1}\mathbb{E}\left(\left\|U^j_1-u_1(t_j)\right\|^2+\left\|U^j_2-u_2(t_j)\right\|^2\right).
\]

For the first component of the second term $\tilde{J}_2$, using Proposition \ref{prop1} we obtain
 \[
 \begin{split}
     \mathbb{E}\|\tilde{J}_{2,1}\|^{2}&=  \mathbb{E}\bigg\|\sum_{j=0}^{n-1}\int_{t_j}^{t_{j+1}}C_h(t_n-t_j)\mathcal{P}_h(f_1(u_1(t_j),u_2(t_j))-f_1(u_1(\tau),u_2(\tau)))dW_1(\tau)\\
     &-\sum_{j=0}^{n-1}\int_{t_j}^{t_{j+1}}S_h(t_n-t_j)\mathcal{P}_h(f_2(u_1(t_j),u_2(t_j))-f_2(u_1(\tau),u_2(\tau)))dW_2(\tau)\bigg\|^2\\
     &\leq 2\mathbb{E}\left\|\sum_{j=0}^{n-1}\int_{t_j}^{t_{j+1}}C_h(t_n-t_j)\mathcal{P}_h(f_1(u_1(t_j),u_2(t_j))-f_1(u_1(\tau),u_2(\tau)))dW_1(\tau)\right\|^2\\
     &+2\mathbb{E}\left\|\sum_{j=0}^{n-1}\int_{t_j}^{t_{j+1}}S_h(t_n-t_j)\mathcal{P}_h(f_2(u_1(t_j),u_2(t_j))-f_2(u_1(\tau),u_2(\tau)))dW_2(\tau)\right\|^2\\
    & =2\mathbb{E}\sum_{j=0}^{n-1}\int_{t_j}^{t_{j+1}}\left\|C_h(t_n-t_j)\mathcal{P}_h(f_1(u_1(t_j),u_2(t_j))-f_1(u_1(\tau),u_2(\tau)))Q^{1/2}_1\right\|^2_{HS}d(\tau)\\
    &+2\mathbb{E}\sum_{j=0}^{n-1}\int_{t_j}^{t_{j+1}}\left\|S_h(t_n-t_j)\mathcal{P}_h(f_2(u_1(t_j),u_2(t_j))-f_2(u_1(\tau),u_2(\tau)))Q^{1/2}_2\right\|^2_{HS}d(\tau)\\
    &\leq 2K_f\sum_{j=0}^{n-1}\int_{t_j}^{t_{j+1}}\mathbb{E}\left(\|u_1(t_j)-u_1(\tau)\|^2+\|u_2(t_j)-u_2(\tau)\|^2\right)d\tau\\
    &\leq 2K_fK_h\sum_{j=0}^{n-1}\int_{t_j}^{t_{j+1}}|t_j-\tau|^{\min(\theta ,1)}d\tau\\
    &\leq Ck^{\min(\theta,1)},
 \end{split}
 \]
 for $\theta \in [0,2]$ and $K_h$ is a H\"{o}lder constant. Above we have used the H\"{o}lder regularity of $u_1 \text{ and } u_2$ from Proposition \ref{prop1} and the value of $k$ less than one.\\

A similar estimate holds for the second component
\[
 \mathbb{E}\|\tilde{J}_{2,2}\|^{2}\leq Ck^{\min(\theta,1)}.
\]

For the first component of third term $\tilde{J}_3$ term , we use It\^{o} isometry and Theorem $1.3$ of \cite{finite} to get 

\[
\begin{split}
\mathbb{E}\|\tilde{J}_{3,1}\|^2&=\mathbb{E}\bigg\|\sum_{j=0}^{n-1}\int_{t_j}^{t_{j+1}}(C_h(t_n-t_j)\mathcal{P}_h-C(t_n-t_j))f_1(u_1(\tau),u_2(\tau))dW_1(\tau)\\
&- \sum_{j=0}^{n-1}\int_{t_j}^{t_{j+1}}(S_h(t_n-t_j)\mathcal{P}_h-S(t_n-t_j))f_2(u_1(\tau),u_2(\tau))dW_2(\tau)\bigg\|^2\\
&\leq 2\mathbb{E} \left\|\sum_{j=0}^{t_{j+1}}\int_{t_j}^{t_{j+1}}(C_h(t_n-t_j)\mathcal{P}_h-C(t_n-t_j))f_1(u_1(\tau),u_2(\tau))dW_1(\tau)\right\|^2\\
&+ 2\mathbb{E} \left\|\sum_{j=0}^{t_{j+1}}\int_{t_j}^{t_{j+1}}(S_h(t_n-t_j)\mathcal{P}_h-S(t_n-t_j))f_2(u_1(\tau),u_2(\tau))dW_2(\tau)\right\|^2\\
&= 2\mathbb{E}\sum_{j=0}^{n-1}\int_{t_j}^{j+1}\left\|(C_h(t_n-t_j)\mathcal{P}_h-C(t_n-t_j))f_1(u_1(\tau),u_2(\tau))Q^{1/2}_1\right\|^2_{HS}d\tau\\
&+2\mathbb{E}\sum_{j=0}^{n-1}\int_{t_j}^{j+1}\left\|(S_h(t_n-t_j)\mathcal{P}_h-S(t_n-t_j))f_2(u_1(\tau),u_2(\tau))Q^{1/2}_2\right\|^2_{HS}d\tau\\
&\leq \sum_{j=0}^{n-1}\int_{t_j}^{t_{j+1}} Ch^{2\theta }\bigg[\mathbb{E}\left\|f_1(u_1(\tau),u_2(\tau))Q^{1/2}_1\right\|^2_{HS}+\mathbb{E}\|f_2(u_1(\tau),u_2(\tau))Q^{1/2}_2\|^2_{HS}\bigg]d\tau\\
& \leq\sum_{j=0}^{n-1}\int_{t_j}^{t_{j+1}}Ch^{2\theta }\bigg[\mathbb{E}\|A^{\theta/2}f_1(u_1(\tau),u_2(\tau))Q^{1/2}_1\|^2_{HS}\\
&\hspace{3cm}+\mathbb{E}\|A^{\theta/2}f_2(u_1(\tau),u_2(\tau))Q^{1/2}_2\|^2_{HS}\bigg]d\tau\\
& \leq Ch^{2\theta}\sup_{0 \leq t \leq T}\mathbb{E}(\|u_1(t)\|^2_{\theta}+\|u_2(t)\|^2_{\theta}+1)
\end{split}
\]
Similarly for the second component we have
\[
\mathbb{E}\|\tilde{J}_{3,2}\|^2\leq Ch^{2\theta}\sup_{0 \leq t \leq T}\mathbb{E}(\|u_1(t)\|^2_{\theta}+\|u_2(t)\|^2_{\theta}+1)
\]
For the first component of the fourth term $\tilde{J}_4$, we get
\[
\begin{split}
  \mathbb{E}\|\tilde{J}_{4,1}\|^2 &\leq 2 \mathbb{E}\left\|\sum_{j=0}^{n-1}\int_{t_j}^{t_{j+1}}(C(t_n-t_j)-C(t_n-\tau))f_1(u_1(\tau),u_2(\tau))dW_1(\tau)\right\|^2\\
    &+ 2\mathbb{E}\left\|\sum_{j=0}^{n-1}\int_{t_j}^{t_{j+1}}(S(t_n-t_j)-S(t_n-\tau))f_2(u_1(\tau),u_2(\tau))dW_2(\tau)\right\|^2\\
    & =2 \mathbb{E}\sum_{j=0}^{n-1}\int_{t_j}^{t_{j+1}}\left\|(C(t_n-t_j)-C(t_n-\tau))f_1(u_1(\tau),u_2(\tau))Q_1^{1/2}\right\|^2_{HS}d\tau\\
    &+2 \mathbb{E}\sum_{j=0}^{n-1}\int_{t_j}^{t_{j+1}}\left\|(S(t_n-t_j)-S(t_n-\tau))f_2(u_1(\tau),u_2(\tau))Q_2^{1/2}\right\|^2_{HS}d\tau\\  
\end{split}
\]
\[
\begin{split}
    ~~~~~~~~~~~~~~~~~~~~~& \leq \sum_{j=0}^{n-1}\int_{t_j}^{t_{j+1}}2C|t_j-\tau|^{\theta}\bigg[\mathbb{E}\left\|A^{\theta/2}f_1(u_1(\tau),u_2(\tau))Q_1^{1/2}\right\|^2_{HS}\\
&\hspace{4cm}+\mathbb{E}\|A^{\theta/2}f_2(u_1(\tau),u_2(\tau))Q_2^{1/2}\|^2_{HS}\bigg]d\tau\\
& \leq CL_fk^{\theta }\sup_{0 \leq t \leq T}\mathbb{E}(\|u_1(t)\|^2_{\theta}+\|u_2(t)\|^2_{\theta}+1).
\end{split}
\]   
Similar to the way we get an estimate for the second component that is 
\[
\mathbb{E}\|\tilde{J}_{4,1}\|^2 \leq Ck^{\theta } \sup_{0 \leq t \leq T}\mathbb{E}(\|u_1(t)\|^2_{\theta}+\|u_2(t)\|^2_{\theta}+1).
\]
\end{proof}
By consolidating all the preceding estimates of the stochastic error, we obtain the expectation of the norm bound for the first component of $Err_s$,  
 given by:
\[
\mathbb{E}\|Err_{s,1}\|^2\leq C\left[h^{2\theta}+k^{min(\theta ,1)}+k \sum_{j=0}^{n-1}\mathbb{E}\left(\left\|U_1^j-u_1(t_j)\right\|^2+ \left\|U_2^j-u_2(t_j)\right\|^2\right)\right].
\]
Here, we have used the fact that $k < 1$ and applied the upper bound property from Proposition \ref{prop1}.

Similarly, the expectation of the norm bound for the second component of $Err_s$ is expressed as:
\[
\mathbb{E}\|Err_{s,2}\|^2\leq C\left[h^{2\theta}+k^{min(\theta ,1)}+k \sum_{j=0}^{n-1}\mathbb{E}\left(\left\|U_1^j-u_1(t_j)\right\|^2+ \left\|U_2^j-u_2(t_j)\right\|^2\right)\right].
\]
By combining the estimates of the initial error, deterministic error, and stochastic error, we obtain the following estimates for the first and second components for $\theta \in [0,2]$:
\[
\begin{split}
    \mathbb{E}\|U^n_1-u_1(t_n)\|^2\leq C\left[h^{2\theta}+k^{min(\theta ,1)}+k \sum_{j=0}^{n-1}\mathbb{E}\left(\left\|U_1^j-u_1(t_j)\right\|^2+ \left\|U_2^j-u_2(t_j)\right\|^2\right)\right] ,\\
   \mathbb{E}\|U^n_2-u_2(t_n)\|^2\leq C\left[h^{2\theta}+k^{min(\theta ,1)}+k \sum_{j=0}^{n-1}\mathbb{E}\left(\left\|U_1^j-u_1(t_j)\right\|^2+ \left\|U_2^j-u_2(t_j)\right\|^2\right)\right].  \end{split}
\]
From the above error bounds and the discrete Gronwall lemma, we obtain
\[
\begin{split}
   \mathbb{E}\left(\left\|U^n_1-u_1(t_n)\right\|^2+\|U^n_2-u_2(t_n)\|^2\right)& \leq C(h^{2\theta}+k^{min(\theta,1)})\\
   &+kC\sum_{j=0}^{n-1} \mathbb{E}\left(\left\|U^j_1-u_1(t_j)\right\|^2+ \mathbb{E}\|U^j_2-u_2(t_j)\|^2\right)\\
   & \leq  C(h^{2\theta}+k^{min(\theta,1)})e^{Ckn}\leq  C(h^{2\theta}+k^{min(\theta,1)}).
\end{split}
\]
Thus, the error estimate for the first component is given by
\[
\|U^n_1-u_1(t_n)\|_{L_2(\Omega,\dot{H}^0)}= \left(\mathbb{E}\left\|U^n_1-u_1(t_n)\right\|^2\right)^{1/2} \leq C(h^{\theta}+k^{min(\theta/2,1/2)}).
\]
Similarly, we obtain the error estimate for the second component.

	\section{Numerical Experiments}\label{S_4}
	
	\subsection{Numerical Example}
	We consider the following stochastic semilinear Schr\"{o}dinger equation in one spatial dimension.
	\begin{equation}\label{eqn_num1}
		\begin{split}
			&du+i \Delta u\,dt =(\sqrt{u_2^2+1}+i\,\sqrt{u_1^2+1})dt+\sqrt{u_1^2+1}dW_1 +i\sqrt{u_2^2+1}\, dW_2  \text{ in } ( 0,1)\times  ( 0,1), \\
			&u(t,0)=0=u(t,1), \quad t\in (0,1),\\
			&u(0,x)=\sin(2\pi x)+i\,x(1-x), \quad x\in ( 0,1),
		\end{split}
	\end{equation}
    where $u_1 \text{ and }u_2$ are real and imaginary part of the solution $u$ respectively.
    Using the full discretization scheme \eqref{eqn1.10} we perform numerical computation for the example \eqref{eqn_num1}. The output supports Theorem \ref{thm4}. Below we provide Figure \ref{fig1} and Figure \ref{fig2} which describe the rate of convergence for the example \eqref{eqn_num1}.

	Let $\{\lambda_j\}_{j=1}^\infty$ be eigen values of $A$ and we take $Q_1=Q_2=A^{-s},\,s\in\mathbb{R}.$ Then, we have for $i=1,2,$
	$$
	\|A ^{\theta /2}Q_i^{1/2}\|_{HS}^2=\|A ^{(\theta-s) /2}\|_{HS}^2=\sum_{j=1}^\infty\lambda_j^{\theta-s} 
	\approx \sum_{j=1}^\infty j^{\frac{2}{d}(\theta-s)},
	$$
	which is finite if and only if $\theta< s-\frac{d}{2},$ where $d$ is the dimension of the spatial domain $\mathcal{O}.$
	In the example above in \eqref{eqn_num1}, $d=1.$ Hence, we require $\theta<s-\frac{1}{2}.$

\begin{figure}[htbp]
    \centering
    \begin{minipage}{0.45\textwidth}
        \centering
        \includegraphics[width=\linewidth]{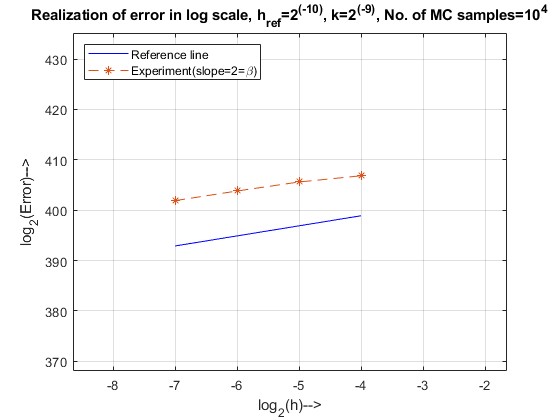} 
        \caption{ The rate of strong convergence with respect to the space discretized parameter h.}
        \label{fig1}
    \end{minipage}
    \hspace{0.05\textwidth} 
    \begin{minipage}{0.48\textwidth}
        \centering
        \includegraphics[width=\linewidth]{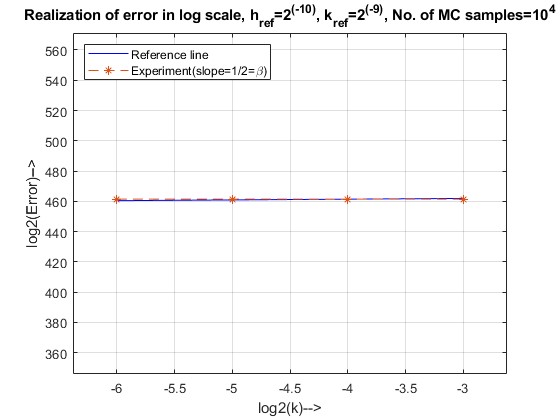} 
        \caption{The rate of strong convergence with respect to the time discretized parameter k.}
        \label{fig2}
    \end{minipage}
\end{figure}
		In the numerical experiment, we have considered two cases:
		\begin{enumerate}
			\item The rate of strong convergence of finite element approximations of stochastic semilinear  Schr\"{o}dinger equation with respect to the space discretized parameter $h$ : We take $\theta=2, d=1,$ hence, $s>2+\frac{1}{2}.$ We choose $s=2+\frac{1}{2}+0.001$, $h_{\text{ref}}=2^{-10}$(step length of space discretization for reference solution) and 
		$k=2^{-9}$(time step to be fixed) and $10^4$ Monte-Carlo samples for sampling in the stochastic case.
        see Figure \ref{fig1},
            \item The rate of strong convergence of time dicretized approximations of stochastic semilinear Schr\"{o}dinger equation with respect to the time discretized parameter $k$ : We take $\theta=.5, d=1,$ hence, $s>1.$ We choose $s=1+0.001$, $h=2^{-10}$(step length to be fixed ) and 
		$k_{\text{ref}}=02^{-9}$(step length of time discretization for reference solution) and $10^4$ Monte-Carlo samples for sampling in the stochastic case. see Figure \ref{fig2}.
		\end{enumerate}
	\begin{appendix}
		\section{ Notations and Definitions}\label{s2}
Let $(U_1,(\cdot,\cdot)_{U_1})\text{ and }(U_2,(\cdot,\cdot)_{U_2})$ be separable Hilbert spaces with corresponding norms $\|\cdot\|_{U_1} \text{ and } \|\cdot\|_{U_2}$. Let $\mathcal{L}(U_1,U_2)$ denote the space of bounded linear operators from $U_1 \text{ to } U_2$, and $\mathcal{L}_2(U_1,U_2)$ the space of Hilbert-Schmidt operators, endowed with norm $\|\cdot\|_{\mathcal{L}_2(U_1,U_2)}$. That is, $T_1 \in \mathcal{L}_2(U_1,U_2) $ if $T_1\in \mathcal{L}(U_1,U_2)$ and 
	\[
	\|T_1\|^2_{\mathcal{L}_2(U_1,U_2)}:=\sum_{j=1}^{\infty}\|T_1e_j\|_{U_2}^2< \infty,
	\]
	where $\{e_j\}_{j=1}^{\infty}$ is an orthonormal basis of $U_1$. The Hilbert-Schmidt norm does not depend on the choice of orthonormal basis. If $U_1=U_2$, we write $\mathcal{L}(U_1)=\mathcal{L}(U_1,U_1)$ and $HS= \mathcal{L}_2(U_1,U_1)$. It is a well-established fact that if
 $T_1 \in \mathcal{L}(U_1) \text{ and } T_2 \in \mathcal{L}_2(U_1,U_2)$, then their composition  $T_2T_1 \text{ also belongs to }\mathcal{L}_2(U_1,U_2)$. Moreover, the following norm bound holds:
	\[
	\|T_2T_1\|_{\mathcal{L}_2(U_1,U_2)} \leq \|T_2\|_{\mathcal{L}_2(U_1,U_2)}\|T_1\|_{\mathcal{L}(U_1)}.
	\]
	
	Let $\left(\Omega, \mathcal{F},P, \{\mathcal{F}_t\}_{t \geq 0}\right )$ be a filtered probability satisfying the usual conditions. We denote by $L^2(\Omega, U_2)$  the space consisting of square-integrable random variables that take values in $U_2$. The norm associated with this space is given by
	\[
	\|v\|_{L^2(\Omega,U_2)}=\mathbb{E}(\|v\|^2_{U_2})^{1/2}=\left(\int_{\Omega}\|v(\omega)\|_{U_2}^2 dP(\omega)\right)^{1/2},
	\]
	where $\mathbb{E}$ is the expectation. 
  \begin{definition}[\cite{rockner}]

   Consider an operator $Q \in \mathcal{L}(U_1)$ that is self-adjoint, positive, and semidefinite, satisfying $Tr(Q)< \infty$, where $Tr(Q)$ denotes its trace. A stochastic process $\{W(t)\}_{t \geq 0}$ taking values in $U_1$ is called  $Q$-Wiener process with respect to the filtration $\{\mathcal{F}_t\}_{t \geq 0}$ if it satisfies the following conditions:
	\begin{enumerate}
		\item $W(0)=0$  almost surely,
		\item $W$  has continuous sample paths with probability one,
		\item The increments of $W$ are independent,
		\item  For any $ 0\leq s \leq t$, the increment $W(t)-W(s), $  follows a Gaussian distribution in $U_1$ with mean zero and covariance operator
		 $(t-s)Q$.
	\end{enumerate}
\end{definition} 
Moreover the stochastic process  $\{W(t)\}_{t \geq 0}$ has the following representation
	\begin{equation}\label{eqn7}
		W(t)=\sum_{j=1}^{\infty} \gamma _j^{1/2} \beta_j(t)e_j,
	\end{equation}
	where $\{(\gamma_j,e_j)\}_{j=1}^{\infty}$ are the eigenpairs of $Q$, with $\{e_j\}$  forming an orthonormal basis, and
     $\{\beta _j\}_{j=1}^{\infty}$
	is a collection of independent standard Brownian motions in $\mathbb{R}.$ It can be verified that the series representation in \eqref{eqn7} converges in  $L^2(\Omega, U_1)$. Specifically, for any $t \geq 0$, we have
		\[
		\|W(t)\|_{L^2(\Omega,U_1)}^2= \mathbb{E}\left(\left\|\sum _{j=1} ^{\infty}\gamma_j ^{1/2} e_j \beta_j(t) \right\|_{U_1}^2\right)
	\]
Applying the linearity of expectation and orthonormality of $\{e_j\}$ this expression simplifies to
\[
    \sum_{j=1}^{\infty}\gamma_j\mathbb{E}(\beta_j(t))^2	 
    \]	
    Since each $\beta_j(t)$   is a standard Brownian motion, we use the property $\mathbb{E}\beta_j(t)^2=t$  to obtain
     \[
    \|W(t)\|_{L^2(\Omega,U_1)}^2= t \sum_{j=1}^{\infty}\gamma_j=t \text{Tr}(Q).
     \]   
        
 We consider here a specific instance of  It\^o's integral, where the integrand is a deterministic function.
	Let $\Phi : [0, \infty) \to  \mathcal{L}(U_1, U_2) $ be a strongly measurable function that satisfies the integrability condition
	\begin{equation}\label{eqn8}
		\int_0^t\|\Phi(\tau)Q^{1/2}\|^2_{\mathcal{L}_2(U_1, U_2) }d\tau < \infty.
	\end{equation}
	Under this assumption, the stochastic integral $\int_0^t\Phi(\tau)\,dW(\tau)$is well-defined, and It\^o's isometry isometry holds:
	\begin{equation} \label{eqn9}
		\left\|\int_0^t\Phi(\tau)\,dW(\tau)\right\|^2_{L^2(\Omega,U_2)}=\int_0^t \|\Phi (\tau) Q^{1/2}\|^2_{\mathcal{L}_2(U_1,U_2)}\,d\tau.
	\end{equation}
	
In a more general setting, consider a self-adjoint, positive, and semidefinite operator $Q \in \mathcal{L}(U_1)$  with eigenpairs $\{(\gamma_j, e_j )\}_{j=1}^{\infty}$. If $Q$ is not of trace class, meaning that  $Tr(Q) = \infty$,  then the series expansion given in \eqref{eqn7} fails to converge in $L^2(\Omega, U_1)$. However, it converges in a suitably chosen (usually larger) Hilbert space, and the stochastic integral $\int ^t _0 \Phi(\tau) dW(\tau) $ can still be defined, and the isometry \eqref{eqn9} holds, as long as \eqref{eqn8} is satisfied. In this case, $W$ is called a cylindrical Wiener process (\cite{prato}). In particular, we may have $Q = I$ (the identity operator).
    \section{Abstract framework and regularity}\label{s2.1}
    	Let us consider the Hilbert space $L^2(\mathcal{O})$ of square integrable Lebesgue measurable functions defined on $\mathcal{O}$ with the usual inner product $(\cdot,\cdot)$ and norm $\|\cdot\|.$
	Let us define the Laplace operator $({A}, D({A}))$ in $L^2(\mathcal{O})$
    as
    \begin{align*}
        D({A}):&=H^2(\mathcal{O}) \cap H^1_0(\mathcal{O}),\\
        {A}u :& =-\Delta u,\quad u
        \in D({A}).
    \end{align*} 
    From spectral analysis (see for e.g. \cite{Yosida}), we know that $({A}, D({A}))$ has eigenpairs $\{(\lambda_j,\phi_j)\}_{j=1}^{\infty}$ satisfying the following:
    $$
    0<\lambda_1< \lambda_2 \leq \lambda_3\leq \cdots,\quad \lim_{j\to\infty} \lambda_j=\infty \quad\text{and}\quad \|\phi_j\|=1,\,\forall j\geq 1.
    $$ We introduce the following fractional Sobolev space using the above spectral decomposition of $({A}, D({A}))$
    as 
	\begin{equation*}
		\dot{H} ^{\gamma}:=D({A} ^{\gamma/2}),\hspace{1cm}\|v\|_{\gamma}:=\|{A}^{\gamma/2}v\|=\left(\sum_{j=1}^{\infty}\lambda^{\gamma}_j(v,\phi_j)^2\right)^{1/2},\quad \gamma \in \mathbb{R},\quad v\in \dot{H} ^{\gamma}. 
	\end{equation*}
    We have $\dot{H} ^{\gamma_2} \subset \dot{H} ^{\gamma_1} \text{ for } \gamma_1 \leq \gamma_2.$ The spaces $ \dot{H} ^{-\gamma}$ can be identified with the dual space $( \dot{H} ^{\gamma})^{\star} \text{ for } \gamma >0;$ see\cite{Thomee}. We note that $\dot{H} ^{0}=L^2(\mathcal{O}),\text{ } \dot{H} ^{1}={H} ^{1}_0(\mathcal{O}), \text{ } \dot{H} ^{2}= D({A})=H^2(\mathcal{O}) \cap H^1_0(\mathcal{O})$ with equivalent norms . We  also observe that the inner product in $ \dot{H} ^{1} \text{ is }(\cdot,\cdot)_1=(\nabla  \cdot, \nabla \cdot).$ We introduce the product  Hilbert spaces for $ \gamma\in \mathbb{R}$
	\begin{equation}
{\mathbf{H}}^{\gamma}:=\dot{H}^{\gamma}\times\dot{H}^{\gamma},\hspace{1cm} |||\mathbf{v}|||_{\gamma}^2:=\|v_1\|^2_{\gamma}+\|v_2\|^2_{\gamma},\quad \mathbf{v}=(v_1,v_2)\in {\mathbf{H}}^{\gamma},
	\end{equation}
	and denote ${\mathbf{H}}={\mathbf{H}}^0=\dot{H}^{0}\times\dot{H}^{0}$ with corresponding norm $|||\cdot|||=|||\cdot|||_0$.
    For $s\in[-1,0]$, we define the operator $(\mathbb{A},D(\mathbb{A}))$ in ${\mathbf{H}}^{\gamma},$
    \begin{align*}
    D(\mathbb{A})&:={\mathbf{H}}^{s+2},\\
\mathbb{A}{\mathbf{u}}&:=\begin{bmatrix}
		0 & -A\\
		A & 0
	\end{bmatrix}\begin{bmatrix}u_1\\u_2	    
	\end{bmatrix}=\begin{bmatrix}
		- A u_2\\
		A u_1
	\end{bmatrix},\quad \mathbf{u}=(u_1,u_2)\in D(\mathbb{A}).
    \end{align*}
	The operator $(\mathbb{A},D(\mathbb{A}))$ generates a unitary group $\{e^{t\mathbb{A}}\}_{t\in\mathbb{R}}$ on  ${\mathbf{H}}^{\gamma}$ (see for e.g. \cite{apazy}) and it is given by
\begin{equation}\label{eqn6}
		e^{t\mathbb{A}}=\begin{bmatrix}
			C(t) & -S(t)\\
			S(t) & C(t)
		\end{bmatrix},\quad t\in\mathbb{R},
	\end{equation}
	where $C(t)=\cos{(tA)} \text{ and }S(t)=\sin{(tA})$ are the  cosine and sine operators. For example, using $ \{ (\lambda _j,\phi _j)\}_{j=1}^{\infty}$ the orthonormal eigenpairs of $A$, the cosine and sine operators are given as, for $ v\in \dot{H}^0$ and for $t\geq 0,$
	\begin{align*}
	C(t)v=\cos{(tA)}v=\sum_{j=1}^{\infty} \cos{(t\lambda_j)}(v,\phi _j)\phi_j,\\
	S(t)v=\sin{(tA)}v=\sum_{j=1}^{\infty} \sin{(t\lambda_j)}(v,\phi _j)\phi_j .
	\end{align*}
    \section{Finite element approximations}\label{s2.2}
Let $\mathcal{T}_h$ represent a regular family of triangulations of 
 the domain $\mathcal{O}$. The parameter $h$ is defined as  $$h=\max_{T\in \mathcal{T}_h}h_T,$$
 where  $h_T=\text{diam}(T)\text{ and, } T \text{ is triangle in }\mathcal{O}$. Define $V_h$ as the space of continuous, piecewise linear functions associated with $\mathcal{T}_h$ that vanish on $\partial \mathcal{O}$.  Consequently, we have $V_h \subset H^1_0(\mathcal{O})=\dot{H}^1.$ 
	
	Since $\mathcal{O}$  is assumed to be convex and polygonal, the triangulations can be precisely aligned with $\partial \mathcal{O}$. Moreover, this ensures the elliptic regularity property $\|w\|_{H^2(\mathcal{O})} \leq C\|Aw\|$ for $w \in D({A})$, see \cite{pgrisvard}. We now recall fundamental results from the finite element theory (see \cite{ciarlet, Brenner}). The norms are denoted as$\|\cdot\|_s=\|\cdot\|_{\dot{H}^s}$.
	
	Consider the orthogonal projection operators  $\mathcal{P}_h:\dot{H}^0 \to V_h\text{ and }\mathcal{R}_h:\dot{H}^1 \to V_h$ which are defined as
	\[
	(\mathcal{P}_hw,\psi)=(w, \psi),\hspace{.5cm} (\nabla \mathcal{R}_hw,\nabla \psi)=(\nabla w,\nabla \psi\
	)\quad \forall \psi \in V_h.
	\]
	
	To introduce a discrete analogue of the norm 
$\|\cdot\|_{s}$, we define 
	\[
	\|w_h\|_{h,s}=\|A _h^{s/2}w\|, \hspace{.3cm} w_h\in V_h, s \in \mathbb{R},
	\]
	where the discrete Laplacian  $A_h: V_h \to V_h$  
  is given by
	\[
	(A_hw_h,\psi)=(\nabla w_h,\nabla \psi) \hspace{.2cm} \forall \psi \in V_h.
	\]
We have the following relation between operators $A_h \text{ and } A$ (see Theorem 4.4 in \cite{kovacsbit(2012)})
    \begin{equation}\label{eqn2.4}
\|A^{\gamma}_h \mathcal{P}_h A ^{-\gamma}v\|_{L_2(\mathcal{O})}^2 \leq \|v\|^2_{L_2(\mathcal{O})}, \hspace{.2cm}\gamma \in [-\frac{1}{2},1], \hspace{.2cm } v \in \dot{H}^0=L_2(\mathcal{O})
\end{equation}
    \end{appendix}
	
	\medskip
	\noindent
	{\bf{\Large{Acknowledgement:}}} We express our gratitude to the Department of Mathematics and
	Statistics at the Indian Institute of Technology Kanpur for providing a conductive research
	environment. For this work,  M. Prasad is grateful
	for the support received through MHRD, Government of India (GATE fellowship). M. Biswas acknowledges support from IIT Kanpur. S. Bhar acknowledges the support from the SERB MATRICS grant (MTR/2021/000517), Government of India.
	
	\bibliographystyle{plain}

\begin{thebibliography}{10}

\bibitem{cohen2016}
Rikard Anton, David Cohen, Stig Larsson, and Xiaojie Wang.
\newblock Full discretization of semilinear stochastic wave equations driven by multiplicative noise.
\newblock {\em SIAM J. Numer. Anal.}, 54(2):1093--1119, 2016.

\bibitem{finite}
Suprio Bhar, Mrinmay Biswas, and Mangala Prasad.
\newblock Finite element approximations of stochastic linear {S}chr\"odinger equation driven by additive wiener noise.
\newblock {\em arXiv preprint arXiv:2410.06006}, 2024.

\bibitem{Brenner}
Susanne~C. Brenner and L.~Ridgway Scott.
\newblock {\em The mathematical theory of finite element methods}, volume~15 of {\em Texts in Applied Mathematics}.
\newblock Springer, New York, third edition, 2008.

\bibitem{courantL}
Thierry Cazenave.
\newblock {\em Semilinear {S}chr\"odinger equations}, volume~10 of {\em Courant Lecture Notes in Mathematics}.
\newblock New York University, Courant Institute of Mathematical Sciences, New York; American Mathematical Society, Providence, RI, 2003.

\bibitem{ciarlet}
Philippe~G. Ciarlet.
\newblock {\em The finite element method for elliptic problems}, volume~40 of {\em Classics in Applied Mathematics}.
\newblock Society for Industrial and Applied Mathematics (SIAM), Philadelphia, PA, 2002.
\newblock Reprint of the 1978 original [North-Holland, Amsterdam; MR0520174 (58 \#25001)].

\bibitem{davidcohen16}
David Cohen and Llu\'is Quer-Sardanyons.
\newblock A fully discrete approximation of the one-dimensional stochastic wave equation.
\newblock {\em IMA J. Numer. Anal.}, 36(1):400--420, 2016.

\bibitem{prato}
Giuseppe Da~Prato and Jerzy Zabczyk.
\newblock {\em Stochastic equations in infinite dimensions}, volume~44 of {\em Encyclopedia of Mathematics and its Applications}.
\newblock Cambridge University Press, Cambridge, 1992.

\bibitem{dautray}
Robert Dautray and Jacques-Louis Lions.
\newblock {\em Mathematical analysis and numerical methods for science and technology. {V}ol. 5}.
\newblock Springer-Verlag, Berlin, 1992.
\newblock Evolution problems. I, With the collaboration of Michel Artola, Michel Cessenat and H\'el\`ene Lanchon, Translated from the French by Alan Craig.

\bibitem{debussche1999}
A.~de~Bouard and A.~Debussche.
\newblock A stochastic nonlinear {S}chr\"odinger equation with multiplicative noise.
\newblock {\em Comm. Math. Phys.}, 205(1):161--181, 1999.

\bibitem{debussche2003}
A.~de~Bouard and A.~Debussche.
\newblock The stochastic nonlinear {S}chr\"odinger equation in {$H^1$}.
\newblock {\em Stochastic Anal. Appl.}, 21(1):97--126, 2003.

\bibitem{pgrisvard}
Pierre Grisvard.
\newblock {\em Elliptic problems in nonsmooth domains}, volume~69 of {\em Classics in Applied Mathematics}.
\newblock Society for Industrial and Applied Mathematics (SIAM), Philadelphia, PA, 2011.
\newblock Reprint of the 1985 original [MR0775683], With a foreword by Susanne C. Brenner.

\bibitem{kovacsbit(2012)}
Mih\'aly Kov\'acs, Stig Larsson, and Fredrik Lindgren.
\newblock Weak convergence of finite element approximations of linear stochastic evolution equations with additive noise.
\newblock {\em BIT}, 52(1):85--108, 2012.

\bibitem{apazy}
A.~Pazy.
\newblock {\em Semigroups of linear operators and applications to partial differential equations}, volume~44 of {\em Applied Mathematical Sciences}.
\newblock Springer-Verlag, New York, 1983.

\bibitem{rockner}
Claudia Pr\'ev\^ot and Michael R\"ockner.
\newblock {\em A concise course on stochastic partial differential equations}, volume 1905 of {\em Lecture Notes in Mathematics}.
\newblock Springer, Berlin, 2007.

\bibitem{Thomee}
Vidar Thom\'{e}e.
\newblock {\em Galerkin finite element methods for parabolic problems}, volume~25 of {\em Springer Series in Computational Mathematics}.
\newblock Springer-Verlag, Berlin, second edition, 2006.

\bibitem{YYanSiam5}
Yubin Yan.
\newblock Galerkin finite element methods for stochastic parabolic partial differential equations.
\newblock {\em SIAM J. Numer. Anal.}, 43(4):1363--1384, 2005.

\bibitem{Yosida}
K.~Yosida.
\newblock {\em Functional analysis}.
\newblock Classics in Mathematics. Springer-Verlag, Berlin, 1995.
\newblock Reprint of the sixth (1980) edition.

\end{thebibliography}

\end{document}